\newtheoremstyle{nonum}{}{}{\itshape}{}{\bfseries}{.}{ }{\thmnote{#3}}
\newtheorem{thm}{Theorem}[section]
\newtheorem*{thm*}{Theorem}
\newtheorem{lem}[thm]{Lemma}
\newtheorem{prop}[thm]{Proposition}
\newtheorem{rem}[thm]{Remark}
\newtheorem*{definition*}{Definition}
\newtheorem*{rems*}{Remarks}
\theoremstyle{nonum}
\newcommand{\R}{\mathbb R}
\newcommand{\RR}{\mathbb R}
\def\A{{\cal A}}
\def\L{{\cal L}}
\def\J{{\cal J}}
\newcommand{\iprod}[2]{\langle #1,#2 \rangle} 
\def\calA{{\cal A}}
\def\calL{{\cal L}}
\def\L{{\calL}}
\def\measure{\mu}
\def\vol{{\rm Vol}}
\def\epi{{\rm epi}\,}
\def\cvx{{\rm Cvx}}
\def\eps{{\varepsilon}}
\def\conv{{\rm conv}}
\newcommand{\infc}{\Box}                         
\newcommand{\jinfc}{\boxdot}                     
\begin{document}
\title{Functional Brunn-Minkowski Inequalities\\ Induced by Polarity}
\date{}
\author{S. Artstein-Avidan, D.I. Florentin, A. Segal}
\maketitle
\begin{abstract}

We prove a new family of inequalities, which compare the integral of a geometric
convolution of non-negative functions with the integrals of the original
functions. For classical inf-convolution, this type of inequality is called the
Pr\'{e}kopa-Leindler inequality, which, restricted to indicators of convex bodies,
gives the classical Brunn-Minkowski inequality. The convolution we consider
is a dif{}ferent one, which arises from the study of the polarity transform for
functions. While inf-convolution arises as the pull back of usual
addition of convex functions under the Legendre transform, our geometric inf-
convolution arises as the pull back of the second order reversing transform on
geometric convex functions (called either polarity transform or ${\calA}$-transform).
These are, up to linear terms, the only order reversing isomorphisms on the
class of geometric convex functions. We prove that the integral of this new
geometric convolution of two functions is bounded from below by the harmonic
average of the individual integrals. Our inequality implies the Brunn-Minkowski
inequality, as well as some other, new, inequalities for volumes of bodies. Our
inequalities are intimately connected with Busemann's convexity theorem, a
new variant of which we prove for 1-convex hulls and log-concave densities.
\end{abstract}

\section{Introduction and results}\label{Sec_Intro}

The Brunn-Minkowski inequality is a fundamental inequality in geometry which has numerous applications in mathematics and beyond. In one of its many equivalent forms, it states that the volume of the Minkowski average of two sets in $\RR^n$, to the power $\frac{1}{n}$ is bounded from below by the average of the volumes of the individual sets to the same power 
(see, e.g. \cite[Chapter 1]{AGM}). The impact of this inequality extends far beyond geometry and analysis, see e.g. \cite{Gardner}. 

In the literature, it is common to view the Pr\'{e}kopa-Leindler inequality \cite{Leindler, Prekopa} as ``the functional analogue of the Brunn-Minkowski inequality''. It compares the integral of some weighted average of two functions $f$ and $g$, with the geometric average of their integrals. More precisely, 
let $f,g:\R^n\to \R^+$ and $\lambda\in
(0,1)$ and consider the function $h_\lambda$ given by 
\begin{equation}\label{cond-PL} h_\lambda(z) = \sup \left\{f(x)^{1-\lambda}g(y)^{\lambda}  :  z= (1-\lambda)x+ \lambda y\right\}.  \end{equation}
This function is called the $\lambda$-sup-convolution of $f$ and $g$, and the Pr\'{e}kopa-Leindler theorem states that 
\begin{equation}\label{eq:PL}
\int  h_\lambda \ge
\left(\int  { f} \right)^{1-\lambda}
\left(\int  { g} \right)^\lambda. 
\end{equation}
In case the function $h_\lambda$ is not measurable, the integral should be understood as an upper integral.
The special case of $f = 1_B$ and $g = 1_A$, indicators of convex sets in $\RR^n$, yields the Brunn-Minkowski inequality, as one easily checks that $h_\lambda = 1_{(1-\lambda)A+\lambda B}$. 

In this paper we claim that alongside the Pr\'{e}kopa-Leindler inequality road to functionalization, which is governed, as we explain below, by the classical Legendre transform, there resides a second, less travelled road towards functionalization of the Brunn-Minkowski inequality, governed by a ``cousin'' of the Legendre transform, which is called the ``polarity transform''. Our main theorem reads as follows:   

\begin{thm}\label{Thm-ginf-measurable}
	Let $f,g:\R^n\to [0,1]$ be  measurable functions, and $\lambda\in (0,1)$. Then the function
	\[
	h_\lambda(z)  =
	\sup_{0< t< 1}\,
	\sup_{z = (1-t)x+ty}
	\left\{ \min
	\left\{
	f(x)^{\frac{1 - t}{1 - \lambda}},
	g(y)^{\frac{  t  }{    \lambda}}
	\right\}\right\}
	\]
	satisfies the following inequality
	\[
	\int_{\R^n}h_\lambda \ge
	\left(
	(1 - \lambda)  \left(\int_{\R^n}f \right)^{-1} +
	\lambda   \left(\int_{\R^n}g \right)^{-1}
	\right)^{-1}.
	\]
	The integral of the function $h_\lambda$ (which is not necessarily measurable), should be interpreted as an upper integral.
\end{thm}

Let us provide some geometric background for the classical Pr\'{e}kopa-Leindler inequality and the new inequality.
In the case where the functions are log-concave, namely when $f = e^{-\varphi}$ and $g = e^{-\psi}$ for convex $\varphi$ and $\psi$, the sup-convolution average of the two functions, $h_\lambda$, is log-concave again. In this case it can be equivalently defined using the  Legendre transform $\cal L$ given for  a function $\phi$ by
\[
(\L \phi)(y) =\sup_{x}\{\iprod{x}{y} - \phi(x)\}, 
\]
and one has
 $h_\lambda = e^{-\varphi\square_\lambda \psi}$ where
\begin{equation}\label{eq:infconvolution-and-L}
\L\left(\varphi \square_\lambda \psi\right) = (1-\lambda)\L \varphi + \lambda \L\psi. 
\end{equation}
In other words, the linear structure of inf-convolution is simply the
pull-back of the standard linear structure of pointwise addition for
functions, under the Legendre transform. Rephrasing yet again, on the
class of convex functions, the Pr\'{e}kopa-Leindler theorem states
that the functional $\phi\mapsto\int e^{-\phi}$ is log-concave with
respect to inf-convolution, i.e.
\begin{equation}\label{eq:PL-2f}
\int       e^{-\varphi\square_\lambda \psi} \ge
\left(\int e^{-\varphi} \right)^{1-\lambda}
\left(\int e^{ -\psi }  \right)^\lambda.
\end{equation}

The Legendre transform, which is in some sense the ``generator'' of
the inf-convolution operation and thus of the Pr\'{e}kopa-Leindler
inequality, is a very well known and useful operation on convex
functions. In \cite{AM2} it was shown that (up to linear terms) it is
the unique order reversing bijection on convex lower semi continuous
functions. However, in \cite{AM4} it was shown that on the subclass
$\cvx_0(\R^n)$ of {\em geometric convex functions}, namely
non-negative lower semi continuous convex functions vanishing at the
origin, there exist precisely two, essentially dif{}ferent order
reversing bijections. One is the Legendre transform, and the other is
the so called {\em polarity transform} $\A$, defined by
\begin{equation}\label{eq:def-of-A}
({\A}\phi)(y) =
\left\{
\begin{array}{lll}
\sup_{\{x\in\R^n: \phi(x)>0\}}
\frac{\iprod{x}{y}-1}{\phi(x)} &
\mbox{if}~~ 0\neq y \in \{\phi^{-1}(0)\}^{\circ} \\
0 & \mbox{if}~~ y=0 \\
{+\infty} & \mbox{if}~~ y \not\in \{\phi^{-1}(0)\}^{\circ}
\end{array}
\right\},
\end{equation}
with the convention $\sup\emptyset = 0$. The transform $\A$ appears
in \cite{Rockafellar} and a similar transform appears in
\cite{Milman-before-Rock}, but it seems to have been left virtually
untouched until rediscovered recently, and proved to be the only
additional order reversing isomorphism on $\cvx_0(\R^n)$. For
properties of $\A$ and dif{}ferent geometric interpretations, see
\cite{AM4}. For dif{}ferential analysis concerning $\A$, and
applications to solving families of dif{}ferential equations, see
\cite{AR}. We mention in this context that our main theorems
can be interpreted as volume estimates for solutions of certain partial
dif{}ferential equations (those linearized by $\A$, as portrayed in
\cite{AR}) in terms of the boundary or initial conditions.

By pulling back the standard linear structure (of pointwise addition
of functions) under the polarity transform $\A$, we obtain a new,
completely dif{}ferent linear structure on $\cvx_0(\R^n)$. In this
linear structure, the sum  of two functions $\varphi$ and $\psi$ is
defined by $\A \left(\A (\varphi) + \A (\psi)\right)$, and
multiplication by a constant is defined similarly. Since the focus of
this paper is the study of concavity properties of certain
functionals, we are interested in {\em averaging} with respect to
this linear structure, thus we introduce the geometric infimum
convolution. Given $\lambda\in[0,1]$ and geometric convex functions
$\varphi,\psi \in \cvx_0(\R^n)$, their {\em geometric
$\lambda$-inf-convolution} is defined by
\[
\varphi \boxdot_\lambda \psi :=
\A
\left( 
	(1 - \lambda) \A\varphi + \lambda \A\psi
\right).
\]

In Section \ref{sec:ginf-formulae} we shall show the validity of the
following formula for the {{geometric $\lambda$-inf-convolution}} of
two geometric convex functions:
\begin{equation}\label{eq:newjinfform}
(\varphi \boxdot_\lambda \psi) (z) =
\inf_{0< t< 1}\, \inf_{z = (1-t)x+ty}
\left\{
\max
\left\{
\frac{1-t}{1-\lambda} \varphi(x), \frac{t}{\lambda}  \psi(y)
\right\}
\right\}.
\end{equation}
We use this equivalent definition of the geometric inf-convolution to
extend it for any two non-negative functions (not necessarily
convex). Considering the functions $f=e^{-\varphi},\, g=e^{-\psi},\,
h_\lambda=e^{-\varphi \boxdot_\lambda \psi}$ from $\R^n$ to $[0,1]$,
we obtain the geometric sup-convolution:
\[
h_\lambda(z)  =
\sup_{0< t< 1}\,
\sup_{z = (1-t)x+ty}
\left\{ \min
\left\{
f(x)^{\frac{1 - t}{1 - \lambda}},
g(y)^{\frac{  t  }{    \lambda}}
\right\}\right\}.
\]

This provides a completely new linear structure on the cone of
non-negative functions, which is dominated by the second ``duality''
for geometric convex functions. This paper is focused on establishing
certain concavity results, the first of which is Theorem
\ref{Thm-ginf-measurable} which we quoted above, which essentially
states that the functional $\varphi \to \int e^{-\varphi}$ is
$(-1)$-concave with respect to the new linear structure. More
precisely, we show that
\begin{equation}\label{eq:PLjinf-2f}
\int_{\R^n} e^{-\varphi \boxdot_\lambda \psi}\ge
\left(
(1 - \lambda) \left(\int_{\R^n}e^{-\varphi} \right)^{-1} +
\lambda  \left(\int_{\R^n}e^{-\psi} \right)^{-1}
\right)^{-1},
\end{equation}
whenever $\lambda\in(0,1)$, $\varphi, \psi:\R^n\to [0,\infty]$. 
A geometric interpretation of \eqref{eq:newjinfform} will also be
given in Section \ref{sec:ginf-formulae}. Note that the geometric
average of the Pr\'ekopa-Leindler inequality is replaced here with a
harmonic average.

We mention that our new inequality implies the Brunn-Minkowski
inequality since, as one may easily check, the functions
$f(x) = e^{-\|x\|_K}$, $g(y) = e^{-\|y\|_T}$ and
$h_\lambda(z) = e^{-\|z\|_{(1-\lambda)K + \lambda T}}$ satisfy the
inequality in the statement of the theorem, so we get that
\[
\vol((1-\lambda)K + \lambda T)  \ge \left(
(1 - \lambda)  \left(\vol( K ) \right)^{-1} +
\lambda   \left(\vol(   T) \right)^{-1}
\right)^{-1},
\]
which by homogeneity implies the classical Brunn-Minkowski inequality
(see, e.g. \cite[Chapter 1]{AGM}). In this sense, our new inequality
may be considered as a functional version of the Brunn-Minkowski
inequality, in a completely dif{}ferent direction than that of the
classical Pr\'ekopa-Leindler inequality. In fact, our generalization
of Theorem \ref{Thm-ginf-measurable} implies a generalized version of
the Brunn-Minkowski inequality as a particular case, see Remark
\ref{rem:new-ineq-4-bodies}.

In the second part of the paper we extend Theorem \ref{Thm-ginf-measurable}
to integration with respect to a general log-concave measure. Note
that while in the classical Pr\'ekopa-Leindler theory, multiplying
$f,g$ and $h$ by a log-concave density yields three functions which
again satisfy the pointwise equality \eqref{cond-PL}, here this is no
longer the case, and one must provide an independent proof.

To do this we utilize the fact that the geometric $\lambda$-inf-convolution operation is
intimately related to the notion of intersection bodies, an operation defined and used in the classical Busemann Theorem. We discuss this relation in detail in Section \ref{Sec_Buse}, and use methods from known Busemann-type theorems to
prove the following theorem.

\begin{thm}\label{Thm_ginf-mu}
Let $\measure$ be a log-concave measure on $\R^n$,
and let $f,g:\R^n\to [0,1]$ be measurable functions which have a finite integral with respect to $\measure$. For any $\lambda\in (0,1)$, the function
\[
h_\lambda(z)  =
\sup_{0< t< 1}\,
\sup_{z = (1-t)x+ty}
\left\{ \min
\left\{
f(x)^{\frac{1 - t}{1 - \lambda}},
g(y)^{\frac{  t  }{    \lambda}}
\right\}\right\}
\]
satisfies
\begin{equation}\label{Ineq_GeomPL-with-log-conc-measure}
	\int_{\R^n}h_\lambda\, d\measure \ge
	\left(
	(1 - \lambda) \left(\int_{\R^n}f\, d\measure \right)^{-1} +
	\lambda  \left(\int_{\R^n}g\, d\measure \right)^{-1}
	\right)^{-1}.
	\end{equation}
\end{thm}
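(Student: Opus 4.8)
The plan is to reduce the weighted statement to a one-dimensional inequality, exactly as in Busemann-type arguments, by slicing the measure along rays through the origin. Since $\mu$ is log-concave, write $d\mu = e^{-V}\,dx$ with $V$ convex. The key observation is that the pointwise hypothesis, with $x,y$ ranging over $\R^n$ and $t$ over $(0,1)$, in particular contains the information that for each fixed direction $\theta\in S^{n-1}$ and any scalars $a,b>0$ one may take $x=a\theta$, $y=b\theta$, and $z=(1-t)a\theta+tb\theta=c\theta$ with $c=(1-t)a+tb$. Thus the restrictions $f_\theta(r)=f(r\theta)$, $g_\theta(r)=g(r\theta)$, $h_\theta(r)=h(r\theta)$ to the ray $\R^+\theta$ satisfy the analogous one-dimensional hypothesis. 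First I would prove the theorem in dimension one (and in fact on a half-line), for the measure $e^{-V(r)}\,dr$ with $V$ convex; this is the technical heart. Then I would integrate the one-dimensional conclusion over $\theta\in S^{n-1}$ against the surface measure, using polar coordinates $dx = r^{n-1}\,dr\,d\theta$, which means the relevant one-dimensional weight is $r^{n-1}e^{-V(r\theta)}$ — still of the form $e^{-W(r)}$ with $W$ convex on $\R^+$ when $n\ge 1$, since $r\mapsto -(n-1)\log r$ is convex on $\R^+$. So the $n$-dimensional case follows from the half-line case by Fubini together with the harmonic-mean superadditivity (the function $(u,v)\mapsto ((1-\lambda)u^{-1}+\lambda v^{-1})^{-1}$ is concave, hence superadditive under integration in the right sense — this is the same bookkeeping that makes \eqref{eq:PLjinf-2f} a statement about the concavity of $\phi\mapsto\int e^{-\phi}$).

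For the one-dimensional half-line case the strategy is a change of variables that turns the weighted integral into an unweighted one and the geometric inf-convolution structure into something the Busemann machinery handles. Concretely, I would use formula \eqref{eq:newjinfform}: the hypothesis says precisely that on each ray $h\ge f\jinfc_\lambda g$ in the extended sense, i.e. $-\log h\le (-\log f)\jinfc_\lambda(-\log g)$. The operation $\jinfc_\lambda$ is, after the substitution $s=1/r$, conjugate to ordinary Minkowski/linear averaging (this is exactly the content of the relation to Busemann's construction advertised in Section \ref{Sec_Buse}: the polarity transform $\A$ intertwines the epigraph-Minkowski structure with a hypograph structure seen through inversion). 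Under $r\mapsto 1/r$ the measure $e^{-W(r)}\,dr$ on $(0,\infty)$ becomes $e^{-W(1/s)}s^{-2}\,ds$; one checks that $s\mapsto W(1/s)+2\log s$ is again convex on $(0,\infty)$ when $W$ is convex and appropriately monotone (this is where I'd need the precise normalization coming from $f,g\in\R^+$, i.e. that the functions are bounded by $1$ on the relevant set, equivalently $-\log f\ge 0$, which is exactly the geometric convex setting). After this substitution the claim becomes an instance of the classical Borell–Brascamp–Lieb / Busemann inequality for the harmonic mean of one-dimensional log-concave integrals, which is known.

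The main obstacle, and where I would spend the most care, is the bookkeeping at the boundary and at infinity in the one-dimensional reduction: the operation $\jinfc_\lambda$ moves mass toward the origin (unlike $\square_\lambda$ which spreads it linearly), so the inversion $r\mapsto 1/r$ sends a neighborhood of $0$ — where $f,g$ may be large — to a neighborhood of $\infty$, and one must make sure no integral is lost or double-counted, and that the "extended" evaluation of $f\jinfc_\lambda g$ at points where one factor is $+\infty$ (in $-\log$ scale) is handled correctly. A second, more structural, subtlety is that multiplying by the log-concave density $e^{-V}$ genuinely changes the problem: on each ray the effective weight is $r^{n-1}e^{-V(r\theta)}$, and although this is log-concave in $r$ on $\R^+$, its behavior under the inversion must be re-examined, so one cannot simply quote the unweighted Theorem \ref{Thm-ginf-measurable} — one reruns its proof with the heavier weight. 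I expect the remaining steps (measurability of the slices, applying Fubini, and the final harmonic-mean integration over $S^{n-1}$) to be routine given the one-dimensional weighted statement.
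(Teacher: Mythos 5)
Your reduction to rays through the origin breaks at the aggregation step, and the direction of the failure is not fixable. Suppose you had the one--dimensional conclusion on each ray, i.e. $H(\theta)\ge M_{-1}^{\lambda}(F(\theta),G(\theta))$ for every $\theta\in S^{n-1}$, where $F(\theta)=\int_0^\infty f(r\theta)\,r^{n-1}e^{-V(r\theta)}\,dr$ and similarly for $G,H$. Integrating in $\theta$ gives $\int h\,d\mu\ge\int_{S^{n-1}}M_{-1}^{\lambda}(F,G)\,d\theta$, and you would then need $\int_{S^{n-1}}M_{-1}^{\lambda}(F,G)\,d\theta\ge M_{-1}^{\lambda}\bigl(\int F,\int G\bigr)$. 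But $M_{-1}^{\lambda}$ is concave and positively $1$-homogeneous, hence superadditive, and superadditivity yields exactly the \emph{reverse} inequality $M_{-1}^{\lambda}\bigl(\int F,\int G\bigr)\ge\int_{S^{n-1}}M_{-1}^{\lambda}(F,G)\,d\theta$ (with equality essentially only when $F/G$ is constant). So the slicewise statement is strictly too weak to imply the global one --- for the same reason one cannot prove Brunn--Minkowski by summing one--dimensional slices. The theorem survives only because the hypothesis couples points on \emph{different} rays, via $(1-t)x+ty$ with $x,y$ not collinear with the origin; your reduction discards precisely that information.

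A second, independent problem is the claim that after the inversion $r\mapsto 1/s$ the weight remains suitably concave. If $W$ is convex, $s\mapsto W(1/s)+2\log s$ need not be convex: for $W(r)=(r-10)^2$ one computes that $W(1/s)$ has negative second derivative for $s>3/20$, and the added term $2\log s$ is itself concave. Even in the unweighted case the pulled-back density $e^{-1/z}z^{-(n+2)}$ of Lemma \ref{Lem-nu-conc} is not log-concave but only $-\tfrac{1}{n+2}$-concave, which Borell's theorem converts into $(-1)$-concavity of the measure; for a general log-concave weight no such concavity survives the inversion, which is exactly why the paper states that the weighted case requires an independent proof rather than a rerun of Theorem \ref{Thm-ginf-measurable}. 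The paper's actual route avoids both obstacles: it lifts $f,g,h$ to star-shaped sets $K_0,K_\lambda,K_1$ in three parallel half-spaces of $\RR^{n+2}$ over the common subspace $E=\RR^n$, verifies the one-step convex hull inclusion $\left((1-t)K_0+tK_1\right)\cap H_{x_\lambda}\subseteq K_\lambda$ via the fractional-linear map $\widetilde{F}$, and then proves the generalized Busemann Theorem \ref{thm-Busemann-combined} by the percentile argument: the $n$-dimensional slices of $K_0$ and $K_1$ by translates of $E$ are coupled through equal-measure percentiles $p_0(\theta),p_1(\theta)$, Pr\'ekopa's theorem is applied to each coupled pair, and AM--GM and GM--HM are applied pointwise in the percentile parameter before the final integration. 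That equal-measure coupling across parallel slices is the ingredient your Fubini aggregation cannot supply.
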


\begin{rem}\label{rem:new-ineq-4-bodies}
Assume $K,T$ are convex bodies containing the origin (not necessarily
in their interior) and let $||\cdot||_K, ||\cdot||_T$ denote their
corresponding Minkowski functionals. Applying inequality
\eqref{Ineq_GeomPL-with-log-conc-measure} to the triplet
$f(x) = e^{-\|x\|_K}$, $g(y) = e^{-\|y\|_T}$ and
$h(z) = e^{-\|z\|_{(1-\lambda)K + \lambda T}}$ (which satisfy the
assumption of Theorem \ref{Thm_ginf-mu}), we get that letting
\[
A_\mu(K) =
\int_{\R^n}\,e^{-\|x\|_K} d\mu (x) =
\int_{0}^\infty e^{-s} \mu(sK)\, ds,
\]
one has 
\[
A_\mu ((1-\lambda)K + \lambda T) \ge
\left(
	(1 - \lambda) \left(A_\mu(K)\right)^{-1} +
		 \lambda  \left(A_\mu(T)\right)^{-1}
\right)^{-1}.
\]

The functional $A_\mu$, defined here for convex bodies, can be extended
to a measure on $\R^n$. If the measure $\mu$ has density $e^{-\phi}$,
then the measure $A_\mu$ has density given by
\[
\left( \frac{dA_\mu}{dx} \right) (x) =
\int_0^\infty s^n e^{-s} e^{-\phi(sx)} ds.
\]
\end{rem}

Our methods turn out to be quite general, and may be used to prove
concavity results for various other ``volume'' functionals on
functions. We illustrate this by proving yet another result of this
type. Denoting for any $p>0$ and measurable $\phi:\R^n\to\R^+$,
$\left\|\phi\right\|_p = \left( \int \phi^p \right)^\frac{1}{p}$
(clearly this is a norm only when $p\ge 1$), we have
\begin{thm}\label{Thm-Lp-norms}
Let $p>0$ and let $f,g,h:\R^n\to \R^+$ be measurable functions such
that $\left\|f\right\|_p$, $\left\|g\right\|_p$, and
$\left\|h\right\|_p$ are finite. Assume that
\[
h((1-t)x + ty) \ge
\min\left\{ \frac{f(x)}{1-t}, \frac{g(y)}{t}\right\},
\]
whenever $t\in (0,1)$ and $x, y\in\R^n$. Then 
\[
\left\| h \right\|_p
\ge
\left\| f
\right\|_p + 
\left\| g \right\|_p.
\]
\end{thm}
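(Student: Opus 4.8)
The plan is to follow the scheme of Theorem~\ref{Thm-ginf-measurable}: pass to super-level sets, apply Brunn--Minkowski, reduce to a one-dimensional statement about radial profiles, and settle that by a transportation-of-measure argument.

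First the standard reductions. We may assume $0<\|f\|_p,\|g\|_p,\|h\|_p<\infty$. Replacing $f,g$ by $\min(f,L),\min(g,L)$ only weakens the hypothesis (since $\min\{\min(f,L)(x)/(1-t),\min(g,L)(y)/t\}\le\min\{f(x)/(1-t),g(y)/t\}$) while the $L_p$-norms converge as $L\to\infty$, so we may assume $f,g$ bounded. Now fix $s>0$ and $t\in(0,1)$: if $f(x)>(1-t)s$ and $g(y)>ts$ then $\min\{f(x)/(1-t),g(y)/t\}>s$, so by hypothesis $h((1-t)x+ty)>s$; hence
\[
(1-t)\{f>(1-t)s\}+t\{g>ts\}\ \subseteq\ \{h>s\},
\]
and Brunn--Minkowski gives, writing $\rho_\phi(u):=\big(|\{\phi>u\}|/\omega_n\big)^{1/n}$,
\[
\rho_h(s)\ \ge\ (1-t)\,\rho_f\big((1-t)s\big)+t\,\rho_g(ts)\qquad(0<t<1),
\]
valid whenever the two right-hand terms are positive, with $\rho_f,\rho_g,\rho_h$ non-increasing and $\|\phi\|_p^p=p\,\omega_n\int_0^\infty u^{p-1}\rho_\phi(u)^n\,du$.

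Substituting $\sigma=(1-t)s$ rewrites the last display as $s\,\rho_h(s)\ge(\Phi_f\oplus\Phi_g)(s)$, where $\Phi_\phi(u):=u\,\rho_\phi(u)$ and $\oplus$ is sup-convolution, so the theorem reduces to the one-dimensional claim: if $\Psi\ge\Phi_f\oplus\Phi_g$ then $\big(\int_0^\infty u^{p-1-n}\Psi(u)^n\,du\big)^{1/p}\ge\big(\int_0^\infty u^{p-1-n}\Phi_f(u)^n\,du\big)^{1/p}+\big(\int_0^\infty u^{p-1-n}\Phi_g(u)^n\,du\big)^{1/p}$. To prove this I would set $A,B$ equal to the two right-hand integrals, transport the normalized measures $u^{p-1-n}\Phi_f(u)^n\,du/A$ and $u^{p-1-n}\Phi_g(u)^n\,du/B$ to Lebesgue measure on $(0,1)$ by increasing maps $\widetilde U,\widetilde V$, put $W:=\widetilde U+\widetilde V$, change variables $u=W(r)$ on the left, use $\Psi(W(r))\ge\Phi_f(\widetilde U(r))+\Phi_g(\widetilde V(r))$ together with $W'=\widetilde U'+\widetilde V'$ and the normalization $\widetilde U'\,\widetilde U^{\,p-1-n}\Phi_f(\widetilde U)^n\equiv A$, and then invoke (a form of) superadditivity of the geometric mean to collapse the integrand to a function of the single ratio $\widetilde V/\widetilde U$; the resulting pointwise inequality has critical value exactly $(A^{1/p}+B^{1/p})^p$.

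The decisive step is this last pointwise estimate. When $p\ge n+1$ the integrand blows up at both ends of the $\widetilde V/\widetilde U$-range, so its unique critical value is a genuine minimum and we are done; for $p<n+1$, however, the pointwise bound fails (the integrand dips below the target as $\widetilde V/\widetilde U\to0$), and one must instead exploit the coupling between $\widetilde V/\widetilde U$ and $\Phi_g(\widetilde V)/\Phi_f(\widetilde U)$ forced by the transport maps, or, for $p<1$, bring in reverse H\"older/Minkowski estimates, or --- most in keeping with the rest of the paper --- run the Busemann-type slicing argument of Theorem~\ref{Thm_ginf-mu} directly in $\R^n$ rather than through the radial reduction. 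Sharpness for every $p>0$ is witnessed by $f=g=\mathbf 1_{B}$, $h=2\,\mathbf 1_{B}$ with $B$ a Euclidean ball, for which equality holds.
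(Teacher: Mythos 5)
Your reductions through the Brunn--Minkowski step are fine: the inclusion $(1-t)\{f>(1-t)s\}+t\{g>ts\}\subseteq\{h>s\}$ does follow from the hypothesis, and the passage to the radial profiles $\Phi_\phi(u)=u\rho_\phi(u)$ and the weighted one-dimensional inequality is a correct (if lossy) reduction. But the proof is not complete, and you say so yourself: the decisive pointwise estimate is only claimed to close for $p\ge n+1$, and for $p<n+1$ you list three possible repairs without carrying out any of them. Since the theorem is asserted for every $p>0$ and every $n$ (so, for each fixed $p$, for all sufficiently large $n$), this is a genuine gap, not a loose end. Moreover the obstruction is structural rather than computational. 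Your one-dimensional claim, read for \emph{arbitrary} profiles, is equivalent to the superadditivity $\nu(A+B)^{1/p}\ge\nu(A)^{1/p}+\nu(B)^{1/p}$ for the $p$-homogeneous measure $d\nu=u^{p-1-n}\,du\,dy$ on $(0,\infty)\times\R^n$, i.e.\ to $\tfrac1p$-concavity of $\nu$; and no measure with full-dimensional support in $\R^{n+1}$ can be $\kappa$-concave for $\kappa>\tfrac1{n+1}$. So for $p<n+1$ any successful argument \emph{must} use the extra structure --- that $\rho_f,\rho_g$ are non-increasing, equivalently that the relevant sets are star-shaped about the origin --- which your radial reduction records but your final pointwise estimate discards. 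The ``coupling forced by the transport maps'' you allude to is precisely the missing ingredient, and it is the hard part of the proof, not a detail.

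For comparison, the paper's route does not touch level sets or Brunn--Minkowski at all. One fixes $\lambda\in(0,1)$, sets $\varphi=(1-\lambda)/f$, $\psi=\lambda/g$, $\eta=1/h$, and observes via Proposition \ref{Prop_new-formula} that the hypothesis is literally $\eta\le\varphi\boxdot_\lambda\psi$, hence $(1-\lambda)F(\epi\varphi)+\lambda F(\epi\psi)\subseteq F(\epi\eta)$ for the fractional-linear map $F(x,z)=(x/z,1/z)$. The measure $\nu_p$ with density $pz^{p-(n+1)}$ satisfies $\nu_p(F(\epi\phi))=\int\phi^{-p}$, and the $\tfrac1p$-concavity of Lemma \ref{Lem-nup-conc}, applied to these particular star-shaped sets, yields $\|h\|_p\ge\|f\|_p+\|g\|_p$ immediately. (The dimensional obstruction above shows that for $p<n+1$ such concavity cannot hold for arbitrary Borel sets, so the star-shapedness of $F(\epi\varphi)$ and $F(\epi\psi)$ is essential there as well.) If you want to finish along your own lines, the realistic option is the last one you name: a Busemann-type slicing/percentile argument in the spirit of Section \ref{Sec-5}, run with the weight $z^{p-(n+1)}$ and with $M_{1/p}$ in place of $M_{-1}$, applied directly to the star-shaped sets rather than to the radial profiles. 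As it stands, the proposal proves the theorem only in the range $p\ge n+1$.
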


\noindent {\bf Acknowledgements:}
The authors would like to thank Bo'az Klartag for pointing out the
relation between Busemann's theorem and the geometric
$\lambda$-inf-convolution operation. We would also like to thank the
anonymous referee for his helpful comments and for pointing out
\cite{Rin}. The first named author was supported in part by ISF grant
No. 665/15. The second named author was supported in part by the U.S.
National Science Foundation Grant DMS-1101636.

\section{Extending the geometric inf-convolution}\label{sec:ginf-formulae}
In this section we extend the operation of the geometric
inf-convolution (which we abbreviate by {\em ginf-convolution}) to
act between any two non-negative functions $\varphi$ and $\psi$. We
then present a new formula for the geometric
$\lambda$-inf-convolution.

In \cite{AM4}, it was shown that on $\cvx_0(\R^n)$ one has $\A\circ\L
=\L\circ\A = \J$, where $\J$ is the order preserving bijection called
{\em the gauge transform} and is given by
\begin{equation}\label{eq:J1}
(\J \phi)(x) = \inf\left\{r>0 : \phi\left(\frac{x}{r}\right)\le \frac{1}{r} \right\}.
\end{equation} 
Thus, given $\varphi,\psi \in \cvx_0(\R^n)$ one has
\begin{eqnarray*}
\J( \varphi \boxdot_\lambda \psi) &=&
\J(\A ((1-\lambda)\A\varphi +\lambda \A \psi)) =
\L((1-\lambda)\A \varphi  + \lambda \A \psi)  \\ &=&
\L((1-\lambda)\L\J \varphi  + \lambda \L\J \psi) =
(\J \varphi ) \square_\lambda  (\J \psi).
\end{eqnarray*}
It is sometimes easier to consider the epi-graphs of functions.  
Denoting the epi-graph of a function $\phi$ by
\begin{equation}\label{def-epi}
\epi(\phi ) = \{(x,z)\in \RR^n \times \RR :\phi (x)< z \},
\end{equation}
one may easily check that the epi-graph of $\varphi\square_\lambda\psi$
is the Minkowski average of the epi-graphs of $\varphi$ and
$\psi$:
\[
\epi(\varphi\square_\lambda\psi) = 
(1-\lambda) \epi(\varphi) + \lambda\epi(\psi). 
\]
Therefore, we get the following relation for the composition $\J$
\begin{equation}\label{Eq_ginf-by-summing-epiJ}
\epi(\J(\varphi \boxdot_\lambda \psi)) =
(1-\lambda) \epi(\J \varphi) + \lambda \epi(\J \psi).
\end{equation}
That is, the ginf-convolution operation  corresponds to the pullback
of Minkowski addition of epi-graphs under the $\J$ transform. In the
same paper \cite{AM4} it was shown that $\J$ is induced by a point
map on $\R^{n}\times \R^+$, given by $F(x,z) = \left(\frac{x}{z},
\frac{1}{z}\right)$. More precisely, for $\phi \in \cvx_0(\R^n)$ we
have that
\begin{equation}\label{Eq_F-induces-J}
F(\epi(\phi)) = \epi(\J \phi). 
\end{equation}
Thus we could equivalently define the ginf-convolution, or the
$\lambda$-ginf-convolution by
\begin{eqnarray*}
\epi (\varphi\boxdot \psi) &=&
F(F(\epi(\varphi)) + F(\epi(\psi))), \\
\epi (\varphi\boxdot_\lambda  \psi) &=&
F((1-\lambda)F(\epi(\varphi)) + \lambda F(\epi(\psi))).
\end{eqnarray*}
This definition may be
extended to the set of non-negative  functions.
\begin{prop}\label{Prop_ginf-for-positive-is-well-def}
Let $\lambda\in(0,1)$ and let $\varphi, \psi : \R^n\to [0,\infty]$.
The sets
\begin{equation*}
	F(F(\epi(\varphi)) + F(\epi(\psi))), \qquad
	F((1-\lambda)F(\epi(\varphi)) + \lambda F(\epi(\psi))),
\end{equation*}
are epi-graphs of functions from $\R^n$ to $[0,\infty]$. We denote
these functions by $\varphi\boxdot \psi$ and $\varphi\boxdot_\lambda
\psi$ respectively.
\end{prop}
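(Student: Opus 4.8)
\noindent\emph{Proof plan.} The plan is to do everything inside $\R^n\times\R^+$, where $F(x,z)=(x/z,1/z)$ is a bijective involution. Since $\varphi,\psi\ge 0$, every point of $\epi(\varphi)$ and $\epi(\psi)$ has positive last coordinate, so these epi-graphs lie in $\R^n\times\R^+$ and $F$ applies to them; as the last coordinate of every point of $F(\epi(\varphi))$ and $F(\epi(\psi))$ is again positive, the set $S:=(1-\lambda)F(\epi(\varphi))+\lambda F(\epi(\psi))$ (and likewise the unweighted sum) lies in $\R^n\times\R^+$, so $F(S)$ is defined. If $\varphi$ or $\psi$ is identically $+\infty$ then $S=\emptyset$, which is the epi-graph of the constant function $+\infty$, and we are done; so assume otherwise. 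It remains to prove that $F(S)$ is the strict epi-graph of some $u\colon\R^n\to[0,\infty]$.

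For $T\subseteq\R^n\times\R^+$ put $R_T(x):=\{s>0:\ s(x,1)\in T\}$. Because $F$ is an involution with $F(x,z)=\tfrac1z(x,1)$, we have $(x,z)\in F(S)\iff\tfrac1z\in R_S(x)$, so the vertical section of $F(S)$ above $x$ is $\{z>0:\ 1/z\in R_S(x)\}$. Such a set is an open upward ray (allowing the empty ray and all of $(0,\infty)$) if and only if $R_S(x)$ is downward closed in $(0,\infty)$ and attains no maximum. So it suffices to check, for every $x$, that (a)~$R_S(x)$ is downward closed, and (b)~$R_S(x)$ has no maximum; the function $u$ is then recovered as $u(x)=1/\sup R_S(x)$ (with $1/{+\infty}:=0$, and $u(x)=+\infty$ when $R_S(x)=\emptyset$). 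Writing $K_1:=F(\epi(\varphi))$, $K_2:=F(\epi(\psi))$, $\varphi_1:=\varphi$, $\varphi_2:=\psi$, a one-line computation gives $s(a,1)\in K_i\iff(a,1/s)\in\epi(\varphi_i)\iff s<1/\varphi_i(a)$, hence $R_{K_i}(a)=(0,1/\varphi_i(a))$ for every $a$ (conventions $1/0=+\infty$, $1/{+\infty}=0$); in particular each $R_{K_i}(a)$ is downward closed with no maximum. Property (a) for $S$ follows at once: if $s(x,1)=(1-\lambda)p_1+\lambda p_2$ with $p_i\in K_i$, write $p_i=z_i(a_i,1)$, $z_i>0$, so $z_i\in R_{K_i}(a_i)$; for $0<s'<s$ we get $s'(x,1)=(1-\lambda)\tfrac{s'}{s}p_1+\lambda\tfrac{s'}{s}p_2$ with $\tfrac{s'}{s}z_i\in R_{K_i}(a_i)$, so $s'(x,1)\in S$.

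Property (b) is the heart of the matter and, I expect, the only genuine obstacle. Suppose for contradiction that $s^\star:=\max R_S(x)$ exists, and write $s^\star(x,1)=(1-\lambda)p_1+\lambda p_2$ with $p_i\in K_i$, $p_i=z_i(a_i,1)$, $z_i>0$. Then $z_i\in R_{K_i}(a_i)=(0,c_i)$ with $z_i<c_i\le+\infty$, so for every $r$ in the non-empty interval $\bigl(1,\min_i c_i/z_i\bigr)$ one has $rz_i\in R_{K_i}(a_i)$, i.e.\ $rp_i\in K_i$, for both $i=1,2$ at once; hence
\[
(1-\lambda)(rp_1)+\lambda(rp_2)=r\,s^\star(x,1)\in S ,
\]
so $rs^\star\in R_S(x)$ with $rs^\star>s^\star$, contradicting maximality. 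The unweighted sum is handled identically. Therefore $F(S)$ is a strict epi-graph of a function $\R^n\to[0,\infty]$, and $\varphi\boxdot\psi$, $\varphi\boxdot_\lambda\psi$ are well defined. It is worth highlighting why (b) needs an argument at all: down-scaling stability of $K_1,K_2$ alone only forces the sections of $F(S)$ to be upward closed, and a priori such a section could be a \emph{closed} ray $[c,\infty)$, making $F(S)$ a non-strict epi-graph. What prevents this is the one-sidedness of the constraints ``$rp_i\in K_i$'': each is satisfied on an interval of ratios $r$ extending above $1$, so a single $r>1$ can dilate both $p_i$ simultaneously while keeping $(1-\lambda)p_1+\lambda p_2$ on the ray through $(x,1)$, which is exactly what makes the section open.
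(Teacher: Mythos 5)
Your proof is correct and takes essentially the same route as the paper's: $F$ turns epi-graphs into sets that are star-shaped about the origin (radial sections of the form $(0,c)$), star-shapedness is preserved under Minkowski addition, and $F$ turns such sets back into epi-graphs. The only substantive addition is your property (b), checking that the radial sections of the sum attain no maximum so that the image is a \emph{strict} epi-graph in the sense of \eqref{def-epi} --- a point the paper's one-line proof passes over in silence, and which you are right to flag as the only place where something could conceivably go wrong.
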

\begin{proof}
Recall that a set $S \subset \R^n$ is called {\em star shaped}, if
for any $x \in S$ the line segment $[0,x]$ connecting $x$ to the
origin is contained in $S$. Note that $F$ is an involution on
$\R^n \times \R^+$ which maps vertical fibers to intervals with one
endpoint at the origin. More precisely, for $x\in\R^n$ and $c>0$ we
have $F(\{x\}\times[c,\infty))= \frac{1}{c} \cdot (0,\, (x,1)]$. Thus
$F$ maps epi-graphs to star shaped sets in $\R^n \times \R^+$, and
such star shaped sets are mapped to epi-graphs. Since the class of
star shaped sets is closed under Minkowski addition, the statement
follows.
\end{proof}
Next, we turn to proving formula \eqref{eq:newjinfform}.
\begin{prop}\label{Prop_new-formula}
Let $\lambda\in(0,1)$ and let $\varphi, \psi : \R^n\to [0,\infty]$.
Then we have
\begin{equation}\label{eq:jinf-form} 
(\varphi\boxdot \psi) (z) = 
\inf_{0< t< 1}\, \inf_{z = (1-t)x + ty}
\max \left\{(1-t)\varphi(x), t\psi(y)\right\},
\end{equation}
and
\begin{equation*}
(\varphi \boxdot_\lambda \psi) (z) =
\inf_{0< t< 1}\, \inf_{z = (1-t)x+ty}
\max \left\{\frac{1-t}{1-\lambda} \varphi(x), \frac{t}{\lambda}  \psi(y)\right\}.
\end{equation*}
\end{prop}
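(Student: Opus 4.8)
The plan is to unwind the definition $\epi(\varphi\boxdot_\lambda\psi)=F\bigl((1-\lambda)F(\epi\varphi)+\lambda F(\epi\psi)\bigr)$ by repeatedly using that $F$ is an involution on $\R^n\times\R^+$ (Proposition~\ref{Prop_ginf-for-positive-is-well-def}), and, for each point $(z,c)\in\R^n\times\R^+$, to characterize exactly when it belongs to this set. Since $F\circ F=\mathrm{id}$, the point $(z,c)$ lies in the set if and only if $F(z,c)=(z/c,1/c)$ lies in $(1-\lambda)F(\epi\varphi)+\lambda F(\epi\psi)$, i.e.\ if and only if there are $(v_1,s_1),(v_2,s_2)\in\R^n\times\R^+$ with
\[
(z/c,\,1/c)=(1-\lambda)(v_1,s_1)+\lambda(v_2,s_2),\qquad \varphi(v_1/s_1)<1/s_1,\quad \psi(v_2/s_2)<1/s_2,
\]
the last two conditions being, again by involutivity, the statements $(v_1/s_1,1/s_1)\in\epi(\varphi)$ and $(v_2/s_2,1/s_2)\in\epi(\psi)$.

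Next I would reparametrize by $x:=v_1/s_1$ and $y:=v_2/s_2$, so $v_1=s_1x$, $v_2=s_2y$, and the two scalar equations become $1/c=(1-\lambda)s_1+\lambda s_2$ and $z/c=(1-\lambda)s_1x+\lambda s_2y$. Dividing one sees that $z=(1-t)x+ty$ is a convex combination with $t=\lambda s_2/\bigl((1-\lambda)s_1+\lambda s_2\bigr)\in(0,1)$, and conversely every convex representation $z=(1-t)x+ty$, $t\in(0,1)$, arises this way: the ratio $s_1/s_2=\lambda(1-t)/((1-\lambda)t)=:\rho$ is forced, while $s_2>0$ remains free. Fixing such a representation, membership of $(z,c)$ thus amounts to the existence of $s_2>0$ with $s_1=\rho s_2$ satisfying the strict inequalities $\varphi(x)<1/s_1$, $\psi(y)<1/s_2$ and $1/c=(1-\lambda)s_1+\lambda s_2=(\lambda/t)\,s_2$. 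The two inequalities say precisely $s_2<\min\{1/(\rho\varphi(x)),\,1/\psi(y)\}$, so the supremum of the attainable values of $1/c$ over this representation equals $(\lambda/t)\min\{1/(\rho\varphi(x)),\,1/\psi(y)\}=\min\bigl\{(1-\lambda)/((1-t)\varphi(x)),\,\lambda/(t\psi(y))\bigr\}$, the last equality being a one-line simplification using the value of $\rho$. Inverting, the infimum of $c$ over this representation is exactly $\max\{\tfrac{1-t}{1-\lambda}\varphi(x),\tfrac{t}{\lambda}\psi(y)\}$, and since all the inequalities are strict this infimum is not attained; hence $(z,c)$ lies in $F((1-\lambda)F(\epi\varphi)+\lambda F(\epi\psi))$ for this fixed $x,y,t$ iff $c>\max\{\tfrac{1-t}{1-\lambda}\varphi(x),\tfrac{t}{\lambda}\psi(y)\}$. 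Taking the union over all representations $z=(1-t)x+ty$ gives $(z,c)\in\epi(\varphi\boxdot_\lambda\psi)$ iff $c>\inf_{0<t<1}\inf_{z=(1-t)x+ty}\max\{\tfrac{1-t}{1-\lambda}\varphi(x),\tfrac{t}{\lambda}\psi(y)\}$, which is the asserted formula. The plain inf-convolution identity \eqref{eq:jinf-form} is obtained by running the identical argument with both Minkowski weights equal to $1$: there $1/c=s_1+s_2=s_2/t$, $s_1/s_2=(1-t)/t$, and the same manipulation yields $\inf c=\max\{(1-t)\varphi(x),t\psi(y)\}$.

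The one genuinely necessary---but routine---care is the bookkeeping of the extended-real conventions, which I would handle inline: when $\varphi(x)=+\infty$ or $\psi(y)=+\infty$ there is no admissible pair $(v_i,s_i)$ and the representation simply contributes the value $+\infty$, consistent with $\max\{\dots\}=+\infty$; when $\varphi(x)=0$ (or $\psi(y)=0$) the corresponding constraint on $s_2$ disappears, $1/(\rho\cdot 0)=+\infty$ does not bind the minimum, and the $\max$ correctly drops the vanishing term; and when $z$ has no representation with $\varphi(x),\psi(y)$ both finite, both sides equal $+\infty$ under the convention $\inf\emptyset=+\infty$. I expect the main obstacle to be exactly this point of rigor: verifying that the correspondence between admissible quadruples $(v_1,s_1,v_2,s_2)$ and data $(x,y,t)$ together with the free scale $s_2$ is genuinely exhaustive, so that passing from the $F$-picture to the double infimum loses no point $(z,c)$. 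Once that is pinned down, the remainder is the displayed algebra in $\rho$.
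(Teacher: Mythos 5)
Your argument is correct and follows essentially the same route as the paper: unwind $F\bigl(F(\epi\varphi)+F(\epi\psi)\bigr)$ using that $F$ is an involution, parametrize the summands by points of the two epigraphs, and identify the resulting infimum over the one remaining free scale with the $\max$ expression. The only cosmetic differences are that the paper first reduces the weighted case to the unweighted one via $\varphi\boxdot_\lambda\psi=\frac{\varphi}{1-\lambda}\boxdot\frac{\psi}{\lambda}$ and settles the endpoint computation with an explicit $\varepsilon$-perturbation, whereas you keep the weights throughout and observe directly that the attainable heights over a fixed representation form an open ray with the stated endpoint.
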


\begin{rem}{\rm
We mention that the usefulness of such a formula goes beyond its
usage in this paper. Indeed, for the case of $\varphi, \psi \in
\cvx_0(\R^n)$ we get by means of this formula an expression for the
polar of a sum:
\[
\A(\varphi + \psi)(y) =
(\A\varphi \boxdot \A\psi)(y) =
\inf_{0< t< 1}\, \inf_{y = (1-t)y_1 + ty_2}
\max \left\{(1-t)\A\varphi(y_1), t\A\psi(y_2)\right\}.
\]
Here one clearly sees that the domain of $\A(\varphi + \psi)(y)$ is
the convex hull of the domains of $\A\varphi$ and $\A\psi$. In fact,
one may figure out where the infimum is attained (say, in the
dif{}ferentiable case) as was shown in \cite[Lemma 8.2]{AR}.}
\end{rem}

\begin{proof}[{\bf Proof of Proposition \ref{Prop_new-formula}}]
Note that since $\lambda F(\epi (\phi)) = F(\epi (\phi/\lambda))$, it
follows
\begin{equation}\label{Eq_lambda-ginf-vs-ginf}
\varphi\boxdot_\lambda  \psi =
\frac{\varphi}{(1-\lambda)}\boxdot \frac{\psi}{\lambda},
\end{equation}
so it suf{}fices to prove \eqref{eq:jinf-form}. We have
\begin{eqnarray*}
\epi (\varphi\boxdot \psi) &=&
F\left(F(\epi (\varphi)) +  F(\epi (\psi))\right) =\qquad \\
&=&F\left(\left\{
\left(\frac{x}{s}+\frac{y}{t}, \frac{1}{s}+\frac{1}{t}\right):
\varphi(x)<s, \psi(y)<t
\right\}
\right)\\
&=&\left\{ \left(  \frac{\frac{x}{s}+\frac{y}{t}}{\frac{1}{s}+\frac{1}{t}},\frac{1}{\frac{1}{s}+\frac{1}{t}} 
 \right) : \varphi(x)<s, \psi(y)<t\right\}.	
\end{eqnarray*}
Therefore, 
\[
(\varphi\boxdot \psi) (z) =
\inf\left\{
\frac{1}{\frac{1}{s}+\frac{1}{t}}  :
z = \frac{\frac{x}{s}+\frac{y}{t}}{\frac{1}{s}+\frac{1}{t}},
\varphi(x)<s, \psi(y)<t
\right\}.
\]
Rewriting we get 
\[
(\varphi\boxdot \psi) (z) =
\inf\left\{
\frac{st}{{s}+{t}}  : z = (1-a)x + ay,
a = \frac{s}{s+t}, \varphi(x)<s, \psi(y)<t
\right\}.
\]
We take the infimum in two steps, first over all choices of $x,s$ and
$y,t$ which satisfy the conditions for a fixed $a$, and then over all
$a\in (0,1)$. We claim that for any fixed $a\in (0,1)$ we have
\begin{eqnarray}\label{eq:oneinfisanotherinf}
&\inf&\left\{
\frac{st}{{s}+{t}}  : z = (1-a)x + ay,
a = \frac{s}{s+t}, \varphi(x)<s, \psi(y)<t
\right\} \\  &=&
\inf \left\{
\max \{(1-a)\varphi(x), a\psi(y)\}:
z = (1-a) x + a y
\right\}.\nonumber
\end{eqnarray}
Indeed we have $\frac{st}{s+t} = (1-a)s > (1-a)\varphi(x)$ and
$\frac{st}{s+t} = at > a\psi(y)$ for any $x,s,y,t$ participating in
the first infimum, thus the left hand side is not smaller than the
right hand side.

For the other direction, assume that for a given $x, y$ which satisfy
$z = (1-a)x + ay$ we have $(1-a)\varphi(x) \ge a\psi(y)$. Let us take
$s = \varphi(x)+\eps$ and choose $t$ so that $a = \frac{s}{s+t}$,
that is, $at = (1 - a)s = (1 - a)(\varphi(x) + \eps) \ge
a\psi(y) + (1-a)\eps > a\psi(y)$. Since $t>\psi(y)$, we get that
$t$ participates in the infimum of the left hand side, and we have 
\[
\frac{st}{{s}+{t}} =
\frac{\varphi(x)t + \eps t}{s + t} <
\frac{\varphi(x)t}{s + t} + \eps =
(1 - a)\varphi(x) + \eps =
\max \{ (1-a)\varphi(x), a\psi(y) \} + \eps.
\] 
Since $\eps$ was arbitrary, we get that fixing $z, a, x, y$
\begin{eqnarray*}
\inf\left\{
 \frac{st}{{s}+{t}}  : z =(1-a)x + ay, a = \frac{s}{s+t},\varphi(x)<s, \psi(y)<t
\right\}
\le
(1-a)\varphi(x),
\end{eqnarray*} 
in the case where $z = (1-a)x + ay$, and $\max \{ (1-a)\varphi(x),
a\psi(y) \} = (1-a)\varphi(x)$. The exact same reasoning works when
the maximum of the two is $a\psi(y)$. Therefore, the two expressions
in \eqref{eq:oneinfisanotherinf} are the same for any fixed $a\in
(0,1)$, and we get that 
\[
(\varphi\boxdot \psi) (z) =
\inf_{0<a<1}
\inf \{ \max \{(1-a)\varphi(x), a\psi(y)\}: {z = (1-a)x + ay} \},
\]
which completes the proof. 
\end{proof}
\begin{rem}{\rm 
For $m$ functions one easily checks the validity of the formulas	
\begin{eqnarray*}
\left(\varphi_1\boxdot \cdots \boxdot \varphi_m\right) (z) &=& 
	\inf_{\sum_{i=1}^m t_i =1}\, \inf_{z = \sum_{i=1}^m t_i x_i}
	\max_{i=1, \ldots, m} \left\{t_i\varphi_i(x_i)\right\},  \\	
\A\left(\varphi_1+ \cdots + \varphi_m\right) (z) &=& 
\inf_{\sum_{i=1}^m t_i =1}\, \inf_{z = \sum_{i=1}^m t_i x_i}
\max_{i=1, \ldots, m} \left\{t_i\A\varphi_i(x_i)\right\}. 
\end{eqnarray*}}
\end{rem}

\section{Pr\'ekopa-Leindler type theorems}
In this section we prove Theorem \ref{Thm-ginf-measurable} and
Theorem \ref{Thm-Lp-norms}. To present these proofs, and also a very
simple proof for the classical Pr\'ekopa-Leindler inequality
\eqref{eq:PL} (which appeared first in \cite{Rin}), we will use
C.~Borell's theorem on concavity of measures. It will be useful to
introduce the following notation for the $p$-average of non-negative
numbers. Given $x,y > 0$, $\lambda \in [0,1]$, and $p\in\R$, denote  
\[
M_p^\lambda(x,y) := ((1-\lambda) x^p + \lambda y^p)^{\frac{1}{p}}.
\]
The cases $p=0, \pm\infty$ are interpreted by limits, so that
$M_0^\lambda(x,y) = x^{1 - \lambda} y^\lambda$, $M_\infty(x,y) = \max
\{x,y\}$ and $M_{-\infty}(x,y) = \min\{x,y\}$. Recall that a measure
$\mu$ on $\R^n$ is called $\kappa$-concave if for all non empty
Borel sets $A, B, C$ such that $(1-\lambda) A + \lambda B \subseteq C$, one has
\begin{equation}\label{Eq_concave-measures}
\measure(C) \ge M_{\kappa}^\lambda(\measure(A), \measure(B)).
\end{equation}
A function $f$ on $\R^n$ is called $\kappa$-concave if for all
$x, y\in\R^n$: 
\[
f((1-\lambda) x + \lambda y) \ge M_{\kappa}^\lambda(f(x), f(y)).
\]
In particular, if $\kappa>0$ then $\kappa$-concavity means that $f^{\kappa}$ is concave whereas if $\kappa<0$ then $\kappa$-concavity means that $f^{\kappa}$ is convex.  When $\kappa = 0$ we call $\kappa$-concavity {\em log-concavity}.

In \cite{Borell}, Borell proved the following classical result
connecting the concavity of a measure with the concavity of its
density function:
\begin{thm}[Borell] \label{thm-Borell}
Let $\mu$ be an absolutely continuous  measure on $\R^n$ with density
$f$, and $n$-dimensional support set. Assume that $\kappa_f\in
[-\frac{1}{n},\infty],\,\kappa_\mu\in [-\infty, \frac{1}{n}]$ satisfy
\[
\kappa_f   = \frac{\kappa_\mu}{1-n\kappa_\mu},\qquad\qquad
\kappa_\mu = \frac{\kappa_f  }{1+n\kappa_f}.
\]
Then $f$ is $\kappa_f$-concave if and only if $\measure$ is
$\kappa_\mu$-concave.
\end{thm}

In particular, Borell's result implies that a measure  is log-concave
if and only if its density is  log-concave. Before proving our main
theorem, let us demonstrate how Borell's theorem easily implies the
classical Pr\'{e}kopa-Leindler inequality. 

Consider the measure $\mu$ on $\R^{n+1} = \{(x, z): x\in \R^n, z\in
\R\}$ with density $d\mu(x,z) = e^{-z}dxdz$. Since this is a
log-concave density, by Borell's theorem $\mu$ is a log-concave
measure. On the other hand we have that  for any $\phi$,
$\mu(\epi(\phi)) = \int e^{-\phi}$, and also,
\[
\epi(\varphi\infc_\lambda \psi) =
(1-\lambda)\epi(\varphi)+\lambda \epi(\psi).
\]
Therefore, using the log-concavity of $\mu$ we have that
\begin{eqnarray*}
\int e^{-\varphi \infc_\lambda \psi} &=& 
\mu((1 - \lambda) \epi(\varphi) + \lambda \epi(\psi))\\ 
&\ge&
\mu( \epi(\varphi) )^{1 - \lambda} \mu ( \epi(\psi) )^\lambda = 
\left(\int e^{-\varphi} \right)^{1-\lambda}
\left(\int e^{-\psi} \right)^\lambda,
\end{eqnarray*}
which is the Pr\'{e}kopa-Leindler inequality \eqref{eq:PL}.

Our Theorem \ref{Thm-ginf-measurable} is a concavity property of the
measure $\mu$, with power $(-1)$, but with respect to a dif{}ferent
addition operation on the epi-graphs, the one that is induced by the
$\A$ transform. We shall make use of the following lemma regarding
the pullback of $\mu$ via $F$.

\begin{lem}\label{Lem-nu-conc}
Let $\nu$ be the measure on $\R^n\times \R^+$ given by
$d\nu (x,z) = e^{-1/z}z^{-(n+2)}dzdx$. Then $\nu $ is $(-1)$-concave
and for any measurable $\phi:\R^n \to \R^+$ we have 
\[
\int_{\R^n}e^{-\phi}=
\nu(F(\epi(\phi))).
\]	
\end{lem}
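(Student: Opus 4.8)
The plan is to treat the two assertions separately.

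\emph{The integral identity.} I would use that $F(x,z)=(x/z,1/z)$ is a smooth involution of $\R^n\times\R^+$, so $F(\epi(\phi))=\{F(x,z):(x,z)\in\epi(\phi)\}$ and one may change variables in $\nu(F(\epi(\phi)))=\int_{F(\epi(\phi))}e^{-1/\zeta}\zeta^{-(n+2)}\,d\zeta\,d\xi$ along the map $F$ itself. A direct computation of the block Jacobian gives $|\det DF(x,z)|=z^{-(n+2)}$, and since the last coordinate of $F(x,z)$ is $1/z$, the density $e^{-1/\zeta}\zeta^{-(n+2)}$ pulls back to $e^{-z}z^{n+2}\cdot z^{-(n+2)}=e^{-z}$. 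Hence $\nu(F(\epi(\phi)))=\int_{\epi(\phi)}e^{-z}\,dz\,dx=\int_{\R^n}\int_{\phi(x)}^{\infty}e^{-z}\,dz\,dx=\int_{\R^n}e^{-\phi}$, where the inner integral is legitimate because $\phi\ge 0$ forces $\epi(\phi)\subseteq\R^n\times\R^+$ (and the identity is read in $[0,\infty]$). Equivalently, one observes that $\nu$ is precisely the pushforward under $F$ of the measure with density $e^{-z}$ on $\R^n\times\R^+$ — the same measure used in the proof of classical Pr\'ekopa--Leindler above — so that $\nu(F(\epi(\phi)))=\sigma(F^{-1}(F(\epi(\phi))))=\sigma(\epi(\phi))$.

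\emph{The $(-1)$-concavity.} Here I would invoke Borell's theorem (Theorem \ref{thm-Borell}) applied in ambient dimension $n+1$, since $\nu$ is absolutely continuous with $(n+1)$-dimensional open, convex support $\R^n\times\R^+$. Taking $\kappa=-1$ and ambient dimension $n+1$, Borell's exponent is $\kappa_{n+1}=\frac{-1}{1-(n+1)(-1)}=-\frac{1}{n+2}$, so it suffices to show the density $\rho(x,z)=e^{-1/z}z^{-(n+2)}$ is $\bigl(-\tfrac1{n+2}\bigr)$-concave, i.e.\ that $\rho^{-1/(n+2)}$ is convex on $\R^n\times\R^+$. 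Now $\rho(x,z)^{-1/(n+2)}=z\,e^{1/((n+2)z)}$ is independent of $x$, so the claim reduces to the one-variable statement that $g(z)=z\,e^{c/z}$ is convex on $(0,\infty)$ for $c=\tfrac1{n+2}>0$; differentiating twice gives $g''(z)=\tfrac{c^2}{z^3}e^{c/z}>0$, which completes this part.

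The only delicate points are bookkeeping rather than conceptual: one must apply Borell's theorem with the correct ambient dimension $n+1$ (not $n$) when computing $\kappa_{n+1}$, and one should keep track of the conventions for $M_{-1}^\lambda$ when some of the integrals vanish or are infinite (using $x^{-1}=\infty$ when $x=0$, and so on). The change of variables itself is routine once one notes that $F$ is its own inverse, so no Jacobian of a separately-defined inverse map is needed.
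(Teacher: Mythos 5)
Your proposal is correct and follows essentially the same route as the paper: compute $|\det DF(x,z)|=z^{-(n+2)}$ to identify $\nu$ as the pullback of the measure $e^{-z}\,dz\,dx$ under the involution $F$, and then apply Borell's theorem in ambient dimension $n+1$ after checking that the density is $\bigl(-\tfrac{1}{n+2}\bigr)$-concave. The only difference is that you spell out the convexity of $z\mapsto z\,e^{c/z}$, which the paper leaves as ``easy to check.''
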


\begin{proof}
The dif{}ferential of $F$ is an upper triangular matrix, with
diagonal entries $1/z,\dots,1/z,-1/z^2$, thus $|\det (D_F(x,z))| = 
z^{-(n + 2)}$. It follows that the pullback of $\mu$ under $F$ has
density $e^{-1/z}z^{-(n+2)}$, and 
\[
\int_{\R^n}e^{-\phi}=\int_{\epi(\phi)}e^{-z}dzdx =
\mu\left(\epi(\phi)\right)=
\nu\left(F(\epi \phi) \right).
\]
It is easy to check that the density of $\nu$ is $-\frac{1}{n+2}$
concave. Theorem \ref{thm-Borell} thus implies that $\nu$ is
$(-1)$-concave. 
\end{proof}

\begin{proof}[{\bf Proof of Theorem \ref{Thm-ginf-measurable}}]
Let $\varphi, \psi, \eta:\R^n \to [0, \infty]$ be defined by
$f = e^{-\varphi},
 g = e^{-\psi},
 h = e^{-\eta}$. By Proposition \ref{Prop_new-formula} it follows
that $\varphi  \boxdot_\lambda  \psi    \ge     \eta$ so
$\epi(\varphi  \boxdot_\lambda  \psi) \subseteq \epi(\eta)$ i.e.
\[
(1-\lambda)F(\epi \varphi) +\lambda F(\epi \psi)
	=	  F(\epi(\varphi  \boxdot_\lambda    \psi))
\subseteq F(\epi\eta).
\]
By Lemma \ref{Lem-nu-conc} the measure $\nu$ is $(-1)$-concave and thus
\begin{eqnarray*}
	\mu (\epi\eta)&=& \nu(F(\epi\eta)) \ge
	M_{-1}^\lambda (\nu(F(\epi \varphi)), \nu (F(\epi \psi))) \\ &=&
	M_{-1}^\lambda (\mu (\epi \varphi), \mu(\epi \psi)).
\end{eqnarray*}
This means
\[
\int_{\R^n}e^{-\eta}\ge
\left(
(1-\lambda)\left(\int_{\R^n}e^{-\varphi}\right)^{-1} + \lambda\left(\int_{\R^n}e^{-\psi}\right)^{-1}
\right)^{-1},
\]
as required.
\end{proof}

Using the same methods, we give a proof of Theorem
\ref{Thm-Lp-norms}. To this end, we use the following lemma (the case
$n+1 < p$ first appeared in \cite{Rin}, Lemma 2).
\begin{lem}\label{Lem-nup-conc}
Let $p>0$ and let $\nu_p$ be the measure on $\R^n\times \R^+$ given by 
$d\nu_p (x,z) = pz^{p-(n+1)}dzdx$. Then for any measurable $\phi:\R^n \to \R^+$ we have 
	\[
	\int_{\R^n}\frac{1}{\phi^p}=
	\nu_p(F(\epi(\phi))).
	\]
If $p\in (0,n+1)$ then $\nu_p$ is log-concave, and if $n+1 \le p$
then $\nu_p$ is $\frac{1}{p}$-concave.
\end{lem}

\begin{proof}
Consider the measure $\mu_p$ on $\R^n \times \R^+$ given by  
\[
d\mu_p = \frac{p}{z^{p+1}}dxdz.
\]
The measure $\nu_p$ is the pullback of $\mu_p$ under the map $F$,
thus for a measurable function $\phi: \R^n \to \R^+$ we have
\[
\nu_p(F (\epi \phi)) =
\mu_p(\epi \phi ) =
\int_{\R^n} \int_{\phi(x)}^{\infty} \frac{p}{z^{p+1}} dz dx =
\int_{\R^n} \frac{1}{\phi(x)^p} dx.
\]
The density of $\nu_p$ is given by $p z^{p-(n+1)}$. If $p\in (0,n+1)$
the exponent ${p-(n+1)}$ is negative, and then the density is a
log-concave function, which implies that $\nu_p$ is log-concave. If
$p=n+1$ then the density is constant, and then $\nu_p$ is
proportional to the $(n+1)$-dimensional Lebesgue measure, which is
$\frac1{n+1}$-concave. If $n+1 < p$ the exponent ${p-(n+1)}$ is
positive, and then the density is
$\left( \frac{1}{p-(n+1)} \right)$-concave, which by Borell's Theorem
\ref{thm-Borell} implies that $\nu_p$ is $\frac{1}{p}$-concave.
\end{proof}

\begin{proof}[{\bf Proof of Theorem \ref{Thm-Lp-norms}}]
Given $f,g,h$ satisfying the conditions of the theorem, and $\lambda
\in(0,1)$, let $\varphi, \psi, \eta$ be defined by
$\varphi = \frac{1-\lambda}{f}, \psi = \frac{\lambda}{g}$, and
$\eta = \frac{1}{h}$. The assumption on $f,g,h$ implies that for
any $x, y\in \R^n$ and $t\in (0,1)$ we have
\[
\eta((1-t)x + ty) \le
\max \left\{\frac{1-t}{1-\lambda} \varphi(x),
\frac{t}{\lambda} \psi(y) \right\}.
\]
By Proposition \ref{Prop_new-formula} it follows that
$\eta \le \varphi \boxdot_\lambda \psi$, i.e. 
$\epi(\varphi \boxdot_\lambda \psi) \subseteq \epi(\eta)$, or
equivalently
\[
(1-\lambda)F(\epi \varphi) +\lambda F(\epi \psi) =
F(\epi(\varphi  \boxdot_\lambda    \psi)) \subseteq F(\epi \eta).
\]
If $n+1\le p$ then by Lemma \ref{Lem-nup-conc} the measure $\nu_p$ is
$\frac{1}{p}$-concave and thus
\begin{eqnarray*}
	\int_{\R^n} h^p
&=&
	\int_{\R^n} \frac{1}{\eta^p} =
	\nu_p(F(\epi(\eta))) \ge \\
&\ge&
	\nu_p \left( (1-\lambda)F(\epi \varphi) +\lambda F(\epi \psi) \right) \ge \\
&\ge&
	M_{1/p}^\lambda (\nu_p(F(\epi \varphi)), \nu_p (F(\epi \psi))) = \\
&=&
	M_{1/p}^\lambda
		\left(
			\int_{\R^n} \frac{1}{\varphi^p},\, \int_{\R^n} \frac{1}{\psi^p}
		\right) = \\
&=&
	\left(
		(1-\lambda)\left( \int_{\R^n} \frac{1}{\varphi^p} \right)^{\frac{1}{p}} + 
		\lambda \left( \int_{\R^n} \frac{1}{\psi^p} \right)^{\frac{1}{p}}
	\right)^p = \\
&=&
\left(
\left( \int_{\R^n} f^p \right)^{\frac{1}{p}} + 
\left( \int_{\R^n} g^p \right)^{\frac{1}{p}}
\right)^p,
\end{eqnarray*}
which completes the proof for the case $n+1 \le p$.
If $p\in (0, n+1)$ then by Lemma \ref{Lem-nup-conc} the measure
$\nu_p$ is log-concave and thus, choosing $\lambda =
\frac{\|g\|_p}{\|f\|_p + \|g\|_p}$ we get
\begin{eqnarray*}
	\int_{\R^n} h^p
&=&
	\int_{\R^n} \frac{1}{\eta^p} =
	\nu_p(F(\epi(\eta))) \ge \\
&\ge&
	\nu_p \left( (1-\lambda)F(\epi \varphi) +\lambda F(\epi \psi) \right) \ge \\
&\ge&
	(\nu_p(F(\epi \varphi)))^{1-\lambda}\cdot (\nu_p (F(\epi \psi)))^\lambda = \\
&=&
	\left( \int_{\R^n} \frac{1}{\varphi^p} \right)^{1-\lambda}\cdot
	\left( \int_{\R^n} \frac{1}{\psi^p}    \right)^\lambda = \\
&=&
	\left( \int_{\R^n} \left(\frac{f}{1-\lambda}\right)^p \right)^{1-\lambda}\cdot
	\left( \int_{\R^n} \left(\frac{g}{  \lambda}\right)^p    \right)^\lambda = \\
&=&
	\left(
		\|f\|_p + \|g\|_p
	\right)^p,
\end{eqnarray*}
which completes the proof for the case $p\in (0, n+1)$.
\end{proof}

\begin{rem}\label{rem-pguyimpliesBM}
As an example let us consider the $\lambda$-inf convolution of
\[ 
\varphi    = \max\left\{\|\cdot\|_K, 1\right\} \qquad {\rm and}
\qquad\psi = \max\left\{\|\cdot\|_T, 1\right\}.
\]
It may be checked (by either one of the definitions in Propositions
\ref{Prop_ginf-for-positive-is-well-def}, \ref{Prop_new-formula})
that
\[
\eta := \varphi \jinfc_\lambda \psi =
\max \left\{ \|\cdot\|_{(1-\lambda)K + \lambda T}, 1\right\}.
\]
Therefore, using the fact that for $p>n$ the following integrals
converge, and are simply multiples of the volumes of the
corresponding bodies, we get that the inequality
\[
\left( \int_{\R^n} \frac{1}{\eta^p} \right)^{\frac{1}{p}}
\ge
(1-\lambda)\left( \int_{\R^n} \frac{1}{\varphi^p} \right)^{\frac{1}{p}} + 
\lambda \left( \int_{\R^n} \frac{1}{\psi^p} \right)^{\frac{1}{p}}
\]
 implies 
\[
 \left( \vol((1-\lambda)K + \lambda T) \right)^{\frac{1}{p}}
 \ge
 (1-\lambda)\left( \vol(K) \right)^{\frac{1}{p}} + 
 \lambda \left( \vol(T) \right)^{\frac{1}{p}}.
\]
For $p\to n$, we have reproduced the Brunn-Minkowski inequality. 
\end{rem}

\section{Relation to Busemann's theorem}\label{Sec_Buse}

Busemann's convexity theorem states that given a centrally symmetric convex body
$K$, its intersection body, whose radial function is given
by $r(u) = \vol_{n-1}(K\cap u^\perp)$, is convex. The proof appears in
\cite{Bus}, see also \cite[Theorem 8.1.10]{Gardner}. In fact,
Busemann proves more; without assuming central symmetry, his proof deals
with the volume of intersections of $K$ with half-spaces. One possible way of
stating his theorem is as follows:
\begin{thm}[Busemann]\label{thm-Busemann}
Let $E$ be an $n-2$ dimensional subspace of $\R^n$. For every $u\in
E^{\perp}$, denote by $H_u$ the closed $n-1$ dimensional
half-space $E+\R^+u$. Let $x_0, x_1 \in E^{\perp}$, and
let $K_0, K_1$ be compact convex subsets of $H_{x_0}, H_{x_1}$
respectively. For $\lambda\in (0,1)$, let $x_{\lambda} = (1-\lambda)
x_0 + \lambda x_1$ and $K_{\lambda} = \conv(K_0, K_1) \cap
H_{x_{\lambda}}$. Then
\[
\frac{|x_{\lambda}|}{\vol(K_{\lambda})}
\le
(1-\lambda) \frac{|x_0|}{\vol(K_0)} + \lambda \frac{|x_1|}{\vol(K_1)}.
\]
\end{thm}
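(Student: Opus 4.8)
The plan is to reduce Busemann's theorem to the case of a single convex body, to slice that body by the $2$-dimensional pencil of half-hyperplanes $\{H_\omega\}$, and to close the estimate with the Brunn--Minkowski inequality applied to cross-sections.

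First I would reduce to one body. Setting $L=\conv(K_0,K_1)$ we have $K_\lambda=L\cap H_{x_\lambda}$ by definition, and $\conv(L\cap H_{x_0},L\cap H_{x_1})=L$ as well, since each side is contained in the other. As $\vol(K_i)\le\vol(L\cap H_{x_i})$, replacing $K_i$ by $L\cap H_{x_i}$ only decreases the right-hand side, so it suffices to prove the inequality when $K_i=L\cap H_{x_i}$ for a single convex body $L$. In that form the assertion says that the positively $1$-homogeneous function $x\mapsto|x|/\vol_{n-1}(L\cap H_{x/|x|})$ on the plane $E^\perp$ is convex -- equivalently, that the planar region with radial function $\omega\mapsto\vol_{n-1}(L\cap H_\omega)$ is convex, which is Busemann's theorem in its classical intersection-body form. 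I would also assume $K_0,K_1$ are $(n-1)$-dimensional (otherwise the right-hand side is $+\infty$) and set aside the degenerate cases: if $\omega_0:=x_0/|x_0|=\omega_1:=x_1/|x_1|$ the claim collapses to Brunn--Minkowski inside one half-hyperplane, while $\omega_0=-\omega_1$ forces $x_\lambda=0$, which is excluded; so henceforth $\omega_0,\omega_1$ are linearly independent.

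Next I would coordinatize $\R^n=\R^{n-2}\times\R^2$ with $E=\R^{n-2}\times\{0\}$, so that $H_\omega=\R^{n-2}\times\R^+\omega$, and for a compact convex $C\subseteq H_\omega$ write $C(\tau):=\{p\in\R^{n-2}:(p,\tau\omega)\in C\}$ for its cross-section at level $\tau\ge0$, so $\vol(C)=\int_0^\infty\vol_{n-2}(C(\tau))\,d\tau$. Using the identity $L=\bigcup_{0\le s\le1}\bigl((1-s)K_0+sK_1\bigr)$ together with the linear independence of $\omega_0,\omega_1$, a point of $(1-s)K_0+sK_1$ lies in $H_{x_\lambda}$ exactly when the $\omega_0$- and $\omega_1$-components of its planar part are in the ratio $(1-\lambda)|x_0|:\lambda|x_1|$; solving these linear constraints identifies the cross-section of $L\cap H_{x_\lambda}$ at level $c$ along $\omega_\lambda:=x_\lambda/|x_\lambda|$ with
\[
\bigcup_{0<s<1}\Bigl[(1-s)\,K_0\!\bigl(\tfrac{c(1-\lambda)|x_0|}{(1-s)|x_\lambda|}\bigr)+s\,K_1\!\bigl(\tfrac{c\lambda|x_1|}{s|x_\lambda|}\bigr)\Bigr].
\]
After the linear substitutions $\tau\mapsto\tfrac{|x_\lambda|}{(1-\lambda)|x_0|}\tau$ on $K_0$ and $\tau\mapsto\tfrac{|x_\lambda|}{\lambda|x_1|}\tau$ on $K_1$, this slice becomes precisely $F\bigl(F(\widehat K_0)+F(\widehat K_1)\bigr)$, where $F(p,z)=(p/z,1/z)$ is the map of Section~\ref{sec:ginf-formulae} and $\widehat K_i$ are the transformed cross-section bodies; this is the link between the ginf-convolution $\boxdot$ and Busemann's construction alluded to above, and it is what makes a Busemann-type estimate the natural tool behind Theorem~\ref{Thm_ginf-mu}.

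Finally I would apply Brunn--Minkowski in $\R^{n-2}$ to each set in the union, using in addition that $\tau\mapsto\vol_{n-2}(K_i(\tau))^{1/(n-2)}$ is concave on its support -- again a consequence of Brunn--Minkowski, since the sets $K_i(\tau)$ are the level sets of the convex body $K_i$. Integrating over $c$ and choosing the free parameter $s=s(c)$ optimally would then give, after a change of variables, the bound
\[
\frac{|x_\lambda|}{\vol_{n-1}(L\cap H_{x_\lambda})}\le(1-\lambda)\frac{|x_0|}{\vol(K_0)}+\lambda\frac{|x_1|}{\vol(K_1)},
\]
which is the assertion. I expect this last step to be the main obstacle: a naive choice such as $s(c)$ constant yields only a strictly weaker inequality, and the harmonic-mean bound comes out sharp only when $s(c)$ is chosen so that $K_0$ and $K_1$ are sampled at ``corresponding levels'' of their two concave volume profiles, at which point the concavity of those profiles must be used decisively (simple examples show the bound fails if concavity is dropped). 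Carrying out this one-variable optimization and the attendant change of variables is the substance of Busemann's original argument; the base case $n=2$, where the cross-sections degenerate to points, is elementary and is treated directly.
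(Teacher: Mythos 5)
Your setup is sound and faithful to the geometry: the reduction to a single body $L$ with $K_i=L\cap H_{x_i}$ is valid, the identification of the cross-section of $L\cap H_{x_\lambda}$ at level $c$ as the union over $s$ of $(1-s)K_0(\cdot)+sK_1(\cdot)$ at the levels you compute is correct, and the observation that this union is an $F$-transformed Minkowski sum is exactly the link to $\boxdot$ that the paper exploits. But the proposal stops where the theorem actually lives. Everything up to ``choosing the free parameter $s=s(c)$ optimally would then give\dots the bound'' is bookkeeping; the harmonic-mean conclusion does not follow from Brunn--Minkowski plus an unspecified optimization, and you yourself flag this step as the main obstacle and defer it to ``the substance of Busemann's original argument.'' That substance is the whole content of the theorem: one must (i) replace the level $c$ by the quantile parameter $\theta$, defining percentile functions $p_0,p_1$ by $\int_0^{p_i(\theta)}m_i(r)\,dr=\theta\int_0^\infty m_i(r)\,dr$, where $m_i(r)$ is the $(n-2)$-volume of the slice of $K_i$ at height $r$ above $E$; (ii) choose the coupling weight $\beta(\theta)=\lambda p_0(\theta)|x_1|/\bigl((1-\lambda)p_1(\theta)|x_0|+\lambda p_0(\theta)|x_1|\bigr)$ so that $p_\lambda(\theta)/|x_\lambda|$ becomes the harmonic mean $M_{-1}^\lambda\bigl(p_0(\theta)/|x_0|,p_1(\theta)/|x_1|\bigr)$; (iii) apply Brunn--Minkowski (or Pr\'ekopa, for log-concavity of the marginal) on the slices to get $m_\lambda(p_\lambda)\ge m_0(p_0)^{1-\beta}m_1(p_1)^{\beta}$; and (iv) close with an arithmetic--geometric followed by a geometric--harmonic means inequality. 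None of (i)--(iv) appears in your writeup, so as it stands the argument is a correct reduction plus a pointer to the literature, not a proof.

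For comparison: the paper does not prove Theorem \ref{thm-Busemann} directly either --- it cites \cite{Bus} and \cite[Theorem 8.1.10]{Gardner} --- but it carries out exactly the four steps above, in greater generality (log-concave density in place of Lebesgue measure, no convexity assumption on the sets, only the one-step-convex-hull inclusion), in the proof of Theorem \ref{thm-Busemann-combined} in Section \ref{Sec-5}. If you want to complete your argument, the percentile construction there is the missing piece: your ``optimal $s(c)$'' is precisely the $\beta(\theta)$ of that proof, and the ``attendant change of variables'' is $c=p_\lambda(\theta)$.
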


In \cite{Barthel-Franz} Barthel and Franz of{}fer a generalization of
Busemann's theorem where the convexity assumptions on the
bodies is relaxed. In \cite{KYZ} Kim, Yaskin, and Zvavitch give a
dif{}ferent extension of Busemann's theorem. They show that when
instead of volume, one considers some even log-concave measure, with
respect to which the hyperplane intersections of some centrally
symmetric convex body $K$ are measured, the same conclusion holds,
that is, the radial function $1/\mu(K\cap u^\perp)$ defines a norm
(here $\mu$ on $u^\perp$ is understood via the restriction of its
density function). Their argument is based on Ball's result on
convexity of certain bodies associated with log-concave functions,
see \cite{Ball}, and \cite[Chapter 10]{AGM} for a discussion of these
bodies and their important role in asymptotic convex geometry. These
bodies were generalized by Bobkov \cite{Bobkov} to measures with
weaker concavity assumptions. In \cite{CFPP} Busemann's theorem is
extended to this larger class of measures, and a very short and
elegant proof for the convexity of these bodies is given. They prove
\begin{thm}[Cordero, Fradelizi, Paouris, Pivovarov] \label{thm-Busemann-CFPP}
Let $\psi:\R^n\to\R^+$ be an even function which is
$\left( -\frac1n \right)$-concave, that is, it satisfies
\[
\psi^{-\frac1n}((1-\lambda)x+ \lambda y) \le
(1-\lambda)\psi^{-\frac1n}(x)+ \lambda \psi^{-\frac1n}(y).
\]
Then the function $\Phi$ defined by $\Phi(0) = 0$ and for $z\neq 0$
\[
\Phi(z) = \frac{|z|}{\int_{z^\perp}\psi(x)dx}
\]
is a norm.  
\end{thm}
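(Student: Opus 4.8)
The plan is to reduce the assertion, by a Fubini argument, to a two-dimensional Busemann problem, and then to settle that two-dimensional case by lifting $\psi$ to a genuine convex body via the map $F(x,z)=(x/z,1/z)$ and invoking the asymmetric Busemann Theorem \ref{thm-Busemann}. Write $\Phi(z)=|z|/\int_{z^\perp}\psi$ for $z\neq 0$. Since $(\alpha z)^\perp=z^\perp$ and $\psi$ is even, $\Phi$ is positively $1$-homogeneous and even, so it is enough to prove that $\Phi$ is convex. Fix linearly independent $z_0,z_1$ (the collinear case being immediate from homogeneity and evenness), set $E=\mathrm{span}\{z_0,z_1\}$, and note that for $z\in E$ the hyperplane $z^\perp$ splits orthogonally as $E^\perp\oplus\ell_z$, where $\ell_z:=z^\perp\cap E$ is a line in $E$. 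By Fubini,
\[
\int_{z^\perp}\psi \;=\; \int_{\ell_z}g(w)\,dw,\qquad g(w):=\int_{E^\perp}\psi(w+v)\,dv\quad(w\in E),
\]
and $g$ is even on $E\cong\R^2$. Here I would invoke the standard marginal property of $s$-concave functions (a form of Borell's inequality, cf.\ \cite{Borell}): being the marginal of the $(-1/n)$-concave function $\psi$ over the $(n-2)$-dimensional fibers $E^\perp$, the function $g$ is $(-1/2)$-concave (for $n=2$ there is nothing to integrate and $g=\psi$). Writing $R$ for the quarter-turn on $E$, so that $\ell_z=\mathrm{span}(Rz)$, everything comes down to showing that $N(v):=|v|/\int_{\mathrm{span}(v)}g$ is convex on $E$; for then $\Phi|_E=N\circ R$ is convex, and in particular $\Phi(z_\lambda)\le(1-\lambda)\Phi(z_0)+\lambda\Phi(z_1)$.

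For the two-dimensional problem set $u:=g^{-1/2}$, which is convex and even; after a routine approximation --- and noting that a non-coercive $u$ would already force some $\int_{\mathrm{span}(v)}g$ to be infinite --- I would assume $u$ coercive, and after rescaling $g$ that $u\ge 1$. Then $\epi(u)\subseteq E\times(1,\infty)$ is a convex set whose only recession direction is the upward ray, and I claim $\mathcal K:=F(\epi(u))$ is a \emph{bounded convex body} in $E\times\R^+$ with $0\in\partial\mathcal K$. Indeed, in homogeneous coordinates $F$ is the projective transformation of $\mathbb P^3$ interchanging the coordinate of the function value with the homogenizing coordinate; such a map sends convex sets to convex sets and bounded sets to bounded sets provided the projective closure of the set avoids the hyperplane that $F$ sends to infinity, and that hyperplane meets the affine chart precisely in $\{z=0\}$ together with the horizontal directions at infinity --- both avoided by $\epi(u)$ thanks to $u\ge 1$ and coercivity. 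Moreover $\mathcal K$ is invariant under $x\mapsto-x$ in the $E$-coordinates because $u$ is even. Finally, for a line $\ell\subseteq E$ through the origin one has $\epi(u|_\ell)=\epi(u)\cap(\ell\times\R)$, and since $F$ preserves $\ell\times\R^+$ this gives $F(\epi(u|_\ell))=\mathcal K\cap(\ell\times\R)$; applying Lemma \ref{Lem-nup-conc} to $\phi=u|_\ell$ with the one-dimensional ambient space $\ell$ and $p=2$ --- for which $\nu_2$ is twice the Lebesgue measure on $\ell\times\R^+$ --- yields
\[
\int_{\ell}g\;=\;\int_{\ell}u^{-2}\;=\;2\,\vol_2\bigl(\mathcal K\cap(\ell\times\R)\bigr).
\]
Thus the line integrals of $g$ are, up to a universal constant, the areas of the sections of the single convex body $\mathcal K$ by the $2$-planes through the line $\{0\}\times\R$.

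Now I would apply Theorem \ref{thm-Busemann} in $\R^3=E\times\R$, taking the $(3-2)$-dimensional subspace to be $L_0:=\{0\}\times\R$, so that $L_0^\perp=E$; for $v\in E\setminus\{0\}$ let $H_v=L_0+\R^+v$ be the associated half-plane and $K_v:=\overline{\mathcal K}\cap H_v$, a compact convex subset of $H_v$. By the $x\mapsto-x$ symmetry, $\mathcal K\cap(\mathrm{span}(v)\times\R)$ consists of two congruent copies of $K_v$, whence $\int_{\mathrm{span}(v)}g=4\,\vol_2(K_v)$. Applying Busemann's inequality to $K_{v_0}$ and $K_{v_1}$, together with the inclusion $\conv(K_{v_0},K_{v_1})\cap H_{v_\lambda}\subseteq\overline{\mathcal K}\cap H_{v_\lambda}=K_{v_\lambda}$ (valid since $\overline{\mathcal K}$ is convex), gives
\[
\frac{|v_\lambda|}{\vol_2(K_{v_\lambda})}\;\le\;(1-\lambda)\frac{|v_0|}{\vol_2(K_{v_0})}+\lambda\frac{|v_1|}{\vol_2(K_{v_1})},
\]
which, since $N(v)=\tfrac14\,|v|/\vol_2(K_v)$, is exactly the convexity of $N$. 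Together with the $1$-homogeneity and evenness of $\Phi$ recorded at the outset, this proves that $\Phi$ is a norm.

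I expect the main obstacle to be the two geometric-analytic inputs rather than the bookkeeping around the Busemann theorem, which is routine once the setup is in place. The first is the marginal concavity step: it is used precisely at the endpoint exponent $-1/n$ of the Borell--Brascamp--Lieb inequality, so one must check that the marginal integrals are finite and that the inequality is not vacuous. The second is the claim that $F$ turns the convex epigraph $\epi(u)$ into a convex (and bounded) body --- this rests on reading $F$ as a projective-linear map and controlling $\epi(u)$ at infinity, which is exactly what forces the preliminary reduction to a coercive $u$.
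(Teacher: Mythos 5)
The paper does not actually prove this statement: Theorem \ref{thm-Busemann-CFPP} is quoted from \cite{CFPP}, whose proof (as the surrounding discussion indicates) goes through the convexity of Ball--Bobkov-type bodies associated with convex measures. Your argument is therefore necessarily a different route, and I believe it is a correct one. The reduction to two dimensions via Fubini and Borell's marginal theorem is sound: the marginal of a $(-1/n)$-concave function over the $(n-2)$-dimensional fibers $E^\perp$ is $s$-concave with $s=\frac{-1/n}{1+(n-2)(-1/n)}=-\tfrac12$, so $u=g^{-1/2}$ is convex and even. The lift of $\epi(u)$ to a bounded convex body via $F$ is also correct --- $F$ carries segments in $\{z>0\}$ to segments (the same fact the paper uses for $\widetilde F$ in Section \ref{Sec_Buse}), and your projective-closure argument for boundedness is the right one --- and the identity $\int_\ell g = 2\,{\rm vol}_2(\mathcal K\cap(\ell\times\R))$ is a legitimate one-dimensional instance of Lemma \ref{Lem-nup-conc}. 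What your approach buys is that it stays entirely inside the paper's own toolkit, with the classical asymmetric Busemann Theorem \ref{thm-Busemann} as the only black box; indeed it closely parallels the paper's proof of Theorem \ref{Thm_ginf-mu}, where $\widetilde F$ plays exactly the role you assign to $F$, except that there the lifted sets are not convex and the authors must prove the generalized Busemann Theorem \ref{thm-Busemann-combined} from scratch, whereas your evenness and $(-1/n)$-concavity hypotheses let you land on a genuinely convex body and quote Busemann directly.

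Two minor corrections. First, your worry that the marginal step sits ``at the endpoint exponent $-1/n$'' is misplaced: the relevant threshold for integrating over $n-2$ variables is $-1/(n-2)$, which is strictly below $-1/n$ for $n\ge 3$, so you are safely inside the admissible range. Second, the finiteness and positivity of all $\int_{z^\perp}\psi$ must be taken as an implicit hypothesis for the statement to hold at all ($\psi\equiv 1$ is $(-1/n)$-concave and even, yet $\Phi\equiv 0$); granting that, your coercivity reduction needs no approximation: a convex $u\ge 1$ bounded along some ray makes the corresponding line integral of $g=u^{-2}$ diverge, while a convex function tending to infinity along a ray necessarily grows at least linearly there, which is precisely the absence of horizontal recession directions that your boundedness claim for $F(\epi(u))$ requires.
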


These theorems are strongly related to our main theorems, as we shall
see below. However, in order to prove our main Theorem
\ref{Thm_ginf-mu} we shall need a version of Busemann's Theorem which
combines several of the above generalizations and seems not to have
appeared in the literature. Namely, we need a log-concave measure
rather than Lebesgue volume, we work with half-spaces rather than
even measures and centrally symmetric bodies, and we do not assume
any kind of convexity on the sets involved. We prove
\begin{thm}\label{thm-Busemann-combined}
Let $e^{-\psi}:\R^{n+2}\to\R^+$ be a log-concave density and let $E$
be an $n$-dimensional subspace of $\R^{n+2}$. For every $u\in
E^{\perp}$, denote by $H_u$ the closed $(n+1)$-dimensional half-space
$E+\R^+u$. Let $x_0, x_1 \in E^{\perp}$ be linearly independent and
let $x_{\lambda} = (1-\lambda)x_0 + \lambda x_1$ for some $\lambda\in
(0,1)$. Assume that $K_0, K_\lambda, K_1$ are (relatively) open and
connected subsets of $H_{x_0}, H_{x_\lambda}, H_{x_1}$ respectively.
If for any $t\in (0,1)$:
\[
\left( (1-t)K_0 + t K_1 \right)\cap H_{x_\lambda} \subseteq K_\lambda,
\]
then
\[
\frac{|x_{\lambda}|}{\int_{K_\lambda} e^{-\psi}}
\le
(1-\lambda) \frac{|x_0|}{\int_{K_0} e^{-\psi}} + 
\lambda \frac{|x_1|}{\int_{K_1} e^{-\psi}}.
\]
\end{thm}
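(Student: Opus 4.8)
\textbf{Proof plan for Theorem \ref{thm-Busemann-combined}.}

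The plan is to reduce the statement to a one-dimensional convexity fact by slicing along the pencil of half-spaces $H_u$, $u \in E^\perp$, and then invoke a Borell-type/Ball-type argument for log-concave measures. First I would set up coordinates: write $\R^{n+2} = E \oplus E^\perp$ with $E^\perp$ two-dimensional, and parametrize $E^\perp \setminus \{0\}$ by the direction $u = v/|v|$. Since $x_0, x_1$ are linearly independent, the segment $[x_0,x_1]$ together with the origin spans $E^\perp$, and for each $\lambda \in (0,1)$ the half-space $H_{x_\lambda}$ is an $(n+1)$-dimensional affine-to-origin object that intersects the cone $\conv(K_0 \cup K_1)$ in a controlled way. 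The key quantity is $V(\lambda) := \int_{K_\lambda} e^{-\psi}$ (or rather the value with $K_\lambda$ replaced by the largest admissible set $(\text{cone hull}) \cap H_{x_\lambda}$, which only helps the inequality since we want a lower bound on $V(\lambda)$ and $K_\lambda$ contains all those intersections). I want to show $\lambda \mapsto |x_\lambda|/V(\lambda)$ is convex, equivalently that $V(\lambda)/|x_\lambda|$ is, in the appropriate sense, a ``harmonic-type'' concave function along the pencil.

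The main technical step is the following: consider the $(n+1)$-dimensional sets $K_0 \subseteq H_{x_0}$, $K_1 \subseteq H_{x_1}$, and form in $\R^{n+2}$ the body (or measurable set) $L = \conv(K_0 \cup K_1)$; more precisely I would work with the ``cone'' $C = \bigcup_{t\in(0,1)} ((1-t)K_0 + tK_1)$, a subset sitting between the two half-spaces. The hypothesis gives $C \cap H_{x_\lambda} \subseteq K_\lambda$. Now I slice $C$ by the half-spaces $H_u$ as $u$ ranges over the arc of directions between $x_0/|x_0|$ and $x_1/|x_1|$. Writing $\rho(u) = \int_{C \cap H_u} e^{-\psi}\, d\mathcal{H}^{n+1}$, the goal is reduced to proving that $|u|/\rho(u)$ — when $u$ is the particular point $x_\lambda$ on the segment — obeys the claimed convex combination bound. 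The mechanism is precisely the one in \cite{CFPP} / Ball's argument: integrating $e^{-\psi}$ over the $(n+1)$-dimensional slice and using the log-concavity of $e^{-\psi}$ together with the Prékopa-type marginal theorem (a marginal of a log-concave function is log-concave) to conclude that a suitable power of $\rho$ is concave in the slicing parameter, and then converting this via the change of variables $u = x_\lambda$ (which introduces the Jacobian factor accounting for $|x_\lambda|$ versus the arclength/angular parameter) into the harmonic-average statement. The dimension count $n+2$ (rather than $n$) is exactly what is needed so that after taking the $(n+1)$-dimensional slice one is left with the correct ``$(-1)$-concavity after division by $|x|$'' behavior; this matches the role of $\R^{n+2}$ and the measure $\nu$ with density $e^{-1/z}z^{-(n+2)}$ in Section \ref{sec:ginf-formulae}.

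Concretely, the steps in order: (1) fix the two-plane $E^\perp$, choose an orthonormal frame in which $x_0, x_1$ are given, and reduce to the largest admissible sets $K_\lambda = C \cap H_{x_\lambda}$ by monotonicity of the integral. (2) For a fixed origin-direction $u \in E^\perp$, express $\int_{C \cap H_u} e^{-\psi}$ as an integral over the ray $\R^+ u$ of the $n$-dimensional marginals of $e^{-\psi}$ over the slabs $C \cap (E + su)$, $s>0$. (3) Apply the Prékopa/marginal theorem to see these marginals are log-concave, and combine across the convex-combination structure $C = \bigcup_t ((1-t)K_0 + tK_1)$ to obtain a pointwise inequality of the type $\psi$ evaluated on a slice of the $t$-combination is dominated by the combination of the $\psi$'s on slices of $K_0$ and $K_1$ — this is where the hypothesis $(1-t)K_0 + tK_1 \cap H_{x_\lambda} \subseteq K_\lambda$ is consumed. (4) Run the one-variable ``Busemann'' computation: letting $F_j(s)$ denote the relevant slice-integral for $K_j$, show $\bigl(s \mapsto (\text{appropriate function of } F)\bigr)$ is such that the harmonic-type average bound $|x_\lambda|/\int_{K_\lambda} e^{-\psi} \le (1-\lambda)|x_0|/\int_{K_0}e^{-\psi} + \lambda |x_1|/\int_{K_1}e^{-\psi}$ falls out — this is the standard endgame of Busemann's theorem, using that the map $t \mapsto x_t$ covers the segment and the Jacobian of the radial parametrization contributes the $|x_t|$ factor.

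The hard part will be step (3)–(4): carefully setting up the $(n+1)$-dimensional slicing so that the log-concavity of $e^{-\psi}$ transfers to the right concavity property of the slice-integrals \emph{and} interacts correctly with the non-convex sets $K_j$ (we assume no convexity, so one cannot slice a convex body — one must track the set-inclusion hypothesis through the marginalization, which is why the statement is phrased with the inclusion $(1-t)K_0+tK_1 \cap H_{x_\lambda}\subseteq K_\lambda$ rather than an equality). Managing the two-dimensionality of $E^\perp$ (so that ``half-spaces'' $H_u$ genuinely sweep out a one-parameter family and the pencil is well-defined) and getting the Jacobian bookkeeping between the linear parameter $\lambda$ and the radial/angular coordinate on $E^\perp$ exactly right is the delicate point; everything else is an adaptation of the arguments in \cite{Ball}, \cite{CFPP}, and the classical proof of Theorem \ref{thm-Busemann}.
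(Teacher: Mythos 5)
Your plan follows the same route as the paper: slice each $K_i$ by the translates $E+ru_i$ of $E$, use Pr\'ekopa--Leindler on the slices (this is where the inclusion hypothesis and the log-concavity of $e^{-\psi}$ are consumed), and then run the classical Busemann endgame to convert slicewise log-concavity into the harmonic-mean statement. The reduction to the smallest admissible $K_\lambda$, namely $\bigcup_{t}((1-t)K_0+tK_1)\cap H_{x_\lambda}$, is also correct, and you rightly flag that non-convexity of the $K_i$ forces you to carry the set inclusion through the marginalization rather than slicing a convex body.

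However, the step you defer as ``the standard endgame falls out'' is where essentially all of the content lives, and as stated your steps (3)--(4) have a gap. The device that makes the argument close is the equimeasure (percentile) reparametrization: one defines $p_0,p_1:[0,1]\to\R^+$ by $\theta=\int_0^{p_i(\theta)}m_i\,/\int_0^\infty m_i$ where $m_i(r)=\int_{K_i\cap(E+ru_i)}e^{-\psi}$, and then chooses a $\theta$-dependent weight $\beta(\theta)=\lambda p_0(\theta)|x_1|/\bigl((1-\lambda)p_1(\theta)|x_0|+\lambda p_0(\theta)|x_1|\bigr)$ so that the point $p_\lambda(\theta)u_\lambda:=(1-\beta(\theta))p_0(\theta)u_0+\beta(\theta)p_1(\theta)u_1$ lies both on the ray $\R^+u_\lambda$ and on the segment joining the two outer slice heights, with $p_\lambda/|x_\lambda|=M_{-1}^\lambda(p_0/|x_0|,p_1/|x_1|)$. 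Two things follow from this choice that your sketch does not account for. First, the convex-combination coefficient at which the inclusion hypothesis is applied is $\beta(\theta)$, not $\lambda$, and it varies with the slice --- this is exactly why the theorem must assume the inclusion for \emph{all} $t\in(0,1)$; a version of your step (3) that only uses the combination at parameter $\lambda$ would not produce the pointwise inequality $m_\lambda(p_\lambda(\theta))\ge m_0(p_0(\theta))^{1-\beta(\theta)}m_1(p_1(\theta))^{\beta(\theta)}$ that the final integration needs. Second, the ``Jacobian bookkeeping'' you mention is not a routine change of radial variable: it is the identity $p_i'(\theta)=\rho(u_i)/m_i(p_i(\theta))$ coming from differentiating the percentile relation, which is what lets the slice measures cancel against the derivatives in the chain of AM--GM and geometric--harmonic mean inequalities. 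Without specifying $\beta(\theta)$ and the percentile functions, step (4) is not a computation one can carry out, so the proposal as written is a correct strategy but not yet a proof.
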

We shall prove Theorem \ref{thm-Busemann-combined} in Section
\ref{Sec-5} and devote the rest of this section to show how it
implies Theorem \ref{Thm_ginf-mu}.
\begin{proof}[{\bf Proof of Theorem \ref{Thm_ginf-mu}.}]
Recall that the functions $f,g,h:\R^n\to [0,1]$ satisfy
\[
h((1 - t)x + ty)\ge
\min
\left\{
	f(x)^{\frac{1-t}{1-\lambda}},
	g(y)^{\frac{ t }{  \lambda}}
\right\},
\]
for any $t\in (0,1)$ and $x,y\in\R^n$. Recall that $\measure$ is a
log-concave measure on $\R^n$, and $\lambda\in(0,1)$. Define
$\varphi_0,\,\varphi_1,\,\varphi_\lambda$ by $f = e^{-\varphi_0},\,
g = e^{-\varphi_1},\, h = e^{-\varphi_\lambda}$. The assumption on
$f, g, h$ implies that
\begin{equation}\label{eq-ginf-condition}
\varphi_\lambda \le \varphi_0\boxdot_\lambda\varphi_1.
\end{equation}  
Fix some $0<s_0 < s_1$ and set $s_\lambda = (1-\lambda) s_0 + \lambda s_1$.
Denoting $x_i=(0,s_i,1)$ for $i = 0,\, \lambda,\, 1$, we identify the
epi-graphs of the three functions $\varphi_0,\, \varphi_\lambda,\,
\varphi_1$ with the sets $K_i \subseteq H_{i} :=
\R^n \times \left( \R^{+}\cdot x_i \right) \subset \R^{n+2}$
as follows
\[ K_i = \{ (x,s_i z,z): z>\varphi_i(x)\}\subseteq H_i.\]
Assume that the measure $\measure$ has a log-concave density
$e^{-\alpha}:\R^n \to \R$, and define the density
$\psi: \R^n\times \R \times\R^{+}$ to be
$\psi(x, s, z) = \alpha(x) + z$. Note that
\[
\int_{\R^n} e^{-\varphi_i}d\mu =
\frac{1}{\sqrt{1 + s_i^2}}
\int_{K_{i}} e^{-\psi},
\]
where the integration on the right hand side is with respect to the
$(n+1)$-dimensional Hausdorf{}f measure $H_{n+1}$. In this
terminology we need to show that
\[
\frac{|x_\lambda|}{\int_{K_\lambda}e^{-\psi}} \le
(1 - \lambda) \frac{|x_0|}{\int_{K_0}e^{-\psi}}  +
     \lambda  \frac{|x_1|}{\int_{K_1}e^{-\psi}},
\]
which is exactly the statement of Theorem \ref{thm-Busemann-combined}.
We are thus left with showing that the conditions of the theorem are
met. Clearly $\psi$ is convex so we must show that for any
$t\in (0,1)$
\begin{equation}\label{eq-show-this-inclusion}
\left( (1-t)K_0 + t K_1 \right)\cap H_{\lambda} \subseteq K_\lambda.
\end{equation}
To this end we define $\widetilde{F}: \R^n\times\R\times\R^+$ by
\[
\widetilde{F}(x,s,z) = \left(\frac{x}{z}, \frac{s}{z}, \frac{1}{z} \right). 
\]
The map $\widetilde{F}$ is an involution, which maps segments to
segments (it is a {\em fractional linear map}). It is closely related
to the map $F$ from Section \ref{sec:ginf-formulae}. Indeed,
\begin{equation}\label{eq-F-vs-F-tilde}
\widetilde{F}(K_i) =
\left\{
	\left(\frac{x}{z}, s_i, \frac{1}{z}\right): z > \varphi_i(x)
\right\} = 
\left\{
\left(x, s_i, z\right): (x, z)\in F(\epi \varphi_i)
\right\},
\end{equation}
and by Proposition \ref{Prop_ginf-for-positive-is-well-def} and the
inequality \eqref{eq-ginf-condition}, we have
\begin{equation}\label{eq-ginf-cond-by-F}
(1-\lambda)F(\epi(\varphi_0)) + \lambda F(\epi(\varphi_1)) =
F(\epi (\varphi_0\boxdot_\lambda\varphi_1)) \subseteq
F(\epi (\varphi_\lambda)).
\end{equation} 
Let $ H_i' = \widetilde{F}(H_i)$ and $A_i = \widetilde{F}(K_i)\subseteq
H_i',\,$ for $i=0, \lambda, 1$. The half-spaces $H_i'$ are parallel, and
$H_\lambda' = (1-\lambda)H_0' + \lambda H_1'$. We have
\begin{eqnarray*}
\bigcup_{k_0\in K_0,\, k_1\in K_1} [k_0, k_1]\cap H_\lambda &=&
\widetilde{F}
\left(
	\widetilde{F}\left(\bigcup_{k_0\in K_0,\, k_1\in K_1} [k_0, k_1]\right)
	\cap
	\widetilde{F}(H_\lambda)
\right)\\ &=&
\widetilde{F}
\left(
	\left(\bigcup_{k_0\in K_0,\, k_1\in K_1} \widetilde{F}([k_0, k_1])\right)
	\cap
	H_\lambda'
\right)\\ &=&
\widetilde{F}
\left(
	\left( \bigcup_{a_0\in A_0,\, a_1\in A_1} [a_0, a_1] \right)
	\cap
	H_\lambda'
\right)\\ &=&
\widetilde{F}
\left(
	\left(\bigcup_{0\le \beta \le 1} (1-\beta)A_0 + \beta A_1\right)
	\cap
	H_\lambda'
\right)\\ &=&
\widetilde{F}
	\left(  (1-\lambda) A_0 + \lambda A_1 \right)\\ &=&
\widetilde{F}
\left(
	(1-\lambda) \widetilde{F}(K_0) + \lambda \widetilde{F}(K_1)
\right)
\subseteq K_\lambda,
\end{eqnarray*}
where the last inclusion follows from \eqref{eq-F-vs-F-tilde} and
\eqref{eq-ginf-cond-by-F}. We have established the condition
\eqref{eq-show-this-inclusion} and thus the proof is complete.
\end{proof}

\section{Generalized Busemann Theorem}\label{Sec-5}
In this section we prove Theorem \ref{thm-Busemann-combined}.
Recall that we are given an $n$-dimensional subspace $E$ of $\R^{n+2}$
(which we assume without loss of generality is spanned by the first
$n$ coordinates i.e. $E = \R^n\times (0,0) \subset \RR^n \times \RR^2$),
and $K_0, K_\lambda, K_1$,  which are subsets of
$H_0 = H_{x_0},\, H_\lambda = H_{x_\lambda}$, and $H_1= H_{x_1}$
respectively, and $x_\lambda = (1-\lambda) x_0 + \lambda x_1$ with
$x_0$ and $x_1$ linearly independent. These satisfy that for any $t\in(0,1)$:
\begin{equation}\label{eq-busemann-gen-assumption}
\left( (1-t)K_0 + t K_1 \right)\cap H_{\lambda} \subseteq K_\lambda.
\end{equation}
Geometrically this means that the ``one step convex hull'' of $K_0$ and $K_1$
(obtained by taking all segments connecting the two sets), when intersected with
$H_\lambda$ (also known as ``harmonic linear combination'' in \cite{Gardner}), is contained in $K_\lambda$. Denote $u_i = \frac{x_i}{|x_i|}$, and
for $r>0$ set
\[
m_i(r) = \int_{K_i\cap (\R^n+ru_i)} e^{-\psi}dH_n,\qquad
\rho(u_i) = \int_0^\infty m_i(r)dr = \int_{K_i} e^{-\psi}dH_{n+1},
\]
where the integration is with respect to Hausdorf{}f measures. Our
aim is to show that 
\[
\frac{|x_\lambda|}{\rho(u_\lambda)} \le
(1-\lambda) \frac{|x_0|}{\rho(u_0)} +
\lambda     \frac{|x_1|}{\rho(u_1)}.
\]
To this end we define the percentile functions $p_0, p_1:[0,1]\to \R^+$ as follows.
For $\theta\in [0,1]$ we let
\[
\theta = \frac
{\int_0^{p_0(\theta)}m_0(r)dr}
{\int_0^{\infty}     m_0(r)dr} = \frac
{\int_0^{p_1(\theta)}m_1(r)dr}
{\int_0^{\infty}     m_1(r)dr}.
\]
Since $K_i$ are open and connected, $m_i$ are positive and continuous
on their support. Thus $p_i$ are well defined, and moreover, they are
dif{}ferentiable, so we may write
\[
\frac
{\int_0^{\infty}m_0(r)dr}
{m_0(p_0(\theta))}
=
p_0'(\theta), \qquad{\rm and} \qquad \frac
{\int_0^{\infty}m_1(r)dr}
{m_1(p_1(\theta))}
=
p_1'(\theta). 
\]
Define $p_\lambda: [0,1]\to \R^+$ by
$p_\lambda(\theta) u_{\lambda} =
(1 - \beta(\theta)) p_0 (\theta)u_0 +
     \beta(\theta)  p_1 (\theta) u_1$, where
\[
\beta(\theta) =
\frac
{\lambda p_0(\theta) |x_1|}
{ (1-\lambda) p_1(\theta) |x_0| + \lambda p_0(\theta) |x_1| }.
\]
One computes that
\begin{equation}\label{Eq_p_lambda}
\frac{p_\lambda(\theta)     }{|x_\lambda|} =
\frac{p_0(\theta)p_1(\theta)}{ (1-\lambda)p_1(\theta)|x_0| + \lambda p_0(\theta)|x_1| } =
 M_{-1}^\lambda\left(\frac{p_0(\theta)}{|x_0|}, \frac{p_1(\theta)}{|x_1|}\right).
\end{equation}
Dif{}ferentiation of (\ref{Eq_p_lambda}) with respect to $\theta$
(after taking reciprocals) gives:
\[
\frac{|x_\lambda|}{p_\lambda^2(\theta)} p_\lambda'(\theta) =
\frac{(1-\lambda) |x_0|}{p_0^2(\theta)}p_0'(\theta) +  \frac{\lambda |x_1|}{p_1^2(\theta)}p_1'(\theta). 
\]
Let us next show that 
\begin{equation}\label{ineq:ms}
m_\lambda(p_\lambda(\theta)) \ge m_0(p_0(\theta))^{1-\beta(\theta)} m_1(p_1(\theta))^{\beta(\theta)}.
\end{equation}
Indeed, by \eqref{eq-busemann-gen-assumption} we have in particular that 
\begin{equation}\label{Eq-some-inclusion}
\left((1 - \beta(\theta))  K_0 
+ \beta(\theta)   K_1\right) \cap H_\lambda
\subseteq K_\lambda.
\end{equation}
We may intersect this inclusion with the $(n+1)$-dimensional af{}fine
subspace $M$ containing $E_0 = \R^n + p_0(\theta)u_0$ and
$E_1 = \R^n + p_1(\theta)u_1$. Note that $M\cap H_i = E_i$, for
$i=0, \lambda, 1$, where we denoted
$E_\lambda = \R^n+p_\lambda(\theta)u_\lambda $ (and, in particular,
$K_i\cap M = K_i \cap E_i$). Intersecting with $M$, the inclusion
\eqref{Eq-some-inclusion} implies
\begin{eqnarray*}
(1-\beta(\theta)) \left( K_0\cap M\right)
+ \beta(\theta)  \left( K_1\cap M \right)&\subseteq&\left((1 - \beta(\theta))  K_0 
+ \beta(\theta)   K_1\right) \cap M \cap H_\lambda
\\&\subseteq&
K_\lambda\cap M.
\end{eqnarray*}
Finally, we use the log-concavity of the density $e^{-\psi}$ (on $M$)
together with Pr\'ekopa's theorem \cite{Prekopa} on the log-concavity
of the marginal of a log-concave density (which follows from
\eqref{eq:PL}, for example), the density here being $e^{-\psi}$
restricted to $\conv (K_0\cap M, K_1\cap M)$. We get that 
\[
\int_{K_\lambda \cap E_\lambda} e^{-\psi}
\ge
\left(\int_{K_0 \cap E_0} e^{-\psi}\right)^{1-\beta(\theta)}
\cdot
\left(\int_{K_1 \cap E_1} e^{-\psi}\right)^{\beta(\theta)},
\]
which is \eqref{ineq:ms}. The rest of the argument follows closely
the classical Busemann argument.
\begin{eqnarray*}
\frac{\rho(u_\lambda)}{|x_\lambda|} &=& \frac
{\int_{K_\lambda}e^{-\psi}}
{|x_\lambda|}=
\frac
{\int_0^\infty m_\lambda(r)dr}
{|x_\lambda|}
= \qquad\qquad\qquad\\ \\ 
&=&
\int_0^1
\frac
{m_\lambda(p_\lambda(\theta))p_\lambda'(\theta)} 
{|x_\lambda|}d\theta = 
\int_0^1 
\frac
{m_\lambda(p_\lambda(\theta))p_\lambda^2(\theta)}
{|x_\lambda|^2}
M_{1}^\lambda \left(\frac{|x_0| p_0'(\theta)}{p_0^2(\theta)}, \frac{|x_1| p_1'(\theta)}{p_1^2(\theta)}\right)d\theta \\ \\
&=&
\int_0^1 
\frac
{m_\lambda(p_\lambda(\theta))p_\lambda^2(\theta)}
{|x_\lambda|^2}
M_{1}^\lambda \left(\frac{|x_0| \rho(u_0)}{m_0(p_0(\theta))p_0^2(\theta)}, \frac{|x_1| \rho(u_1)}{m_1(p_1(\theta))p_1^2(\theta)}\right)d\theta \\ \\
&=&
\int_0^1 
m_\lambda(p_\lambda(\theta))
M_{-1}^\lambda\left(\frac{p_0(\theta)}{|x_0|}, \frac{p_1(\theta)}{|x_1|}\right)^2 
M_{1}^\lambda \left(\frac{|x_0| \rho(u_0)}{m_0(p_0(\theta))p_0^2(\theta)}, \frac{|x_1| \rho(u_1)}{m_1(p_1(\theta))p_1^2(\theta)}\right)d\theta.
\end{eqnarray*}
Denoting $w_i = w_i(\theta) := \frac{|x_i|}{p_i(\theta)}$ and
$a_i = a_i (\theta) :=
\frac{w_i(\theta) \rho(u_i)}{|x_i| m_i(p_i(\theta))}$ we get,
using \eqref{ineq:ms}
\begin{eqnarray*}
\frac{\rho(u_\lambda)}{|x_\lambda|} &\ge&
\int_0^1 \frac{m_0(p_0(\theta))^{1-\beta(\theta)} m_1(p_1(\theta))^{\beta(\theta)}}
{((1 - \lambda) w_0(\theta) + \lambda w_1(\theta))^2}
M_1^\lambda 
\left(  \frac{\rho(u_0)w_0^2(\theta)}{|x_0| m_0(p_0(\theta))}, \frac{\rho(u_1)w_1^2(\theta)}{|x_1| m_1(p_1(\theta))}  \right) d\theta \\ \\
&=&
\int_0^1
\frac
{m_0(p_0)^{\frac{(1 - \lambda) w_0}{(1 - \lambda) w_0 + \lambda w_1}} m_1(p_1)^{\frac{\lambda w_1}{(1 - \lambda) w_0 + \lambda w_1}}}
{(1 - \lambda) w_0 + \lambda w_1}
\left( \frac{(1 - \lambda) w_0}{(1 - \lambda) w_0 + \lambda w_1}a_0 +  \frac{\lambda w_1}{(1 - \lambda) w_0 + \lambda w_1}a_1 \right)d\theta.
\end{eqnarray*}
Applying the arithmetic-geometric means inequality we get 
\begin{eqnarray*}
\frac{\rho(u_\lambda)}{|x_\lambda|} &\ge&
\int_0^1
\frac
{(a_0m_0(p_0))^{\frac{(1 - \lambda) w_0}{(1 - \lambda) w_0 + \lambda w_1}} (a_1 m_1(p_1))^{\frac{\lambda w_1}{(1 - \lambda) w_0 + \lambda w_1}}}
{(1 - \lambda) w_0 + \lambda w_1}d\theta \\ \\
&=&
\int_0^1
\frac
{(w_0 \rho(u_0)/|x_0|)^{\frac{(1 - \lambda) w_0}{(1 - \lambda) w_0 + \lambda w_1}}
((w_1 \rho(u_1)/|x_1|))^{\frac{\lambda w_1}{(1 - \lambda) w_0 + \lambda w_1}}}
{(1 - \lambda) w_0 + \lambda w_1}d\theta.
\end{eqnarray*}
Applying the geometric-harmonic means inequality in the last integral we get 
\[
\frac{\rho(u_\lambda)}{|x_\lambda|} \ge \int_0^1 M_{-1}^\lambda \left(\frac{\rho(u_0)}{|x_0|}, \frac{\rho(u_1)}{|x_1|}\right) d\theta = 
M_{-1}^\lambda \left(\frac{\rho(u_0)}{|x_0|}, \frac{\rho(u_1)}{|x_1|}\right)
\]
That is,
\[
\frac{|x_\lambda|}{\rho(u_\lambda)} \leq (1-\lambda) \frac{|x_0|}{\rho(u_0)} + \lambda \frac{|x_1|}{\rho(u_1)},
\]
as required. \qed

\smallskip \noindent
Shiri Artstein-Avidan \\
School of Mathematical Sciences \\
Tel Aviv University, Tel Aviv 69978, Israel  \\
{\it e-mail}: artstein@post.tau.ac.il \\

\smallskip \noindent
Dan Florentin \\
Department of Mathematical Sciences  \\
Kent State University, Kent, OH, 44242, USA  \\
{\it e-mail}: danflorentin@gmail.com  \\

\smallskip \noindent
Alexander Segal \\
 Afeka Academic College of Engineering, Tel Aviv 69107, Israel\\
{\it e-mail}: segalale@gmail.com  \\

\end{document}